\newtheorem{lemma}{Lemma}[section]
\newtheorem{theorem}{Theorem}[section]
\newtheorem{proposition}{Proposition}[section]
\theoremstyle{remark}
\newtheorem{rem}{Remark}[section]
\numberwithin{equation}{section}
\renewcommand{\r}{\rho}
\newcommand{\R}{{\mathbb R}}
\newcommand{\p}{\partial}
\newcommand{\norm}[1]{\left\Vert#1\right\Vert}
\newcommand{\abs}[1]{\left\vert#1\right\vert}
\newcommand{\set}[1]{\left\{#1\right\}}
\newcommand{\pr}[1]{\left(#1\right)}
\newcommand{\lb}[1]{\left\{#1\right.}
\newcommand{\defs}{:=}
\newcommand{\sTo}{\rightarrow}
\newcommand{\pt}[1]{\partial_{#1}}
\newcommand{\mS}{\mathcal{S}}
\DeclareMathOperator{\tg}{tan} 
\DeclareMathOperator{\ctg}{cot}
\newcommand{\Real}{\mathbb R}
\newcommand{\D}{\mathscr{D}}
\newcommand{\G}{\mathcal{G}}
\newcommand{\Pe}{\mathcal{P}}
\newcommand{\me}{\mathrm{e}}
\newcommand{\dif}{\mathrm{d}}
\newcommand{\mi}{\mathrm{i}}
\newcommand{\fa}{\mathscr{A}}
\newcommand{\ff}{\mathscr{F}}
\newcommand{\fj}{\mathscr{J}}
\newcommand{\ft}{\mathscr{T}}
\newcommand{\fp}{\mathscr{P}}
\begin{document}
\title[Stability of Transonic Shocks in Potential Flow]
{Stability of Transonic Shock-Fronts in Three-Dimensional Conical
Steady Potential Flow past a Perturbed Cone}

\author{Gui-Qiang Chen \and Beixiang Fang}
\address{Gui-Qiang Chen:
School of Mathematical Sciences, Fudan University, Shanghai 200433, China;\\
Department of Mathematics, Northwestern University,
2033 Sheridan Road, Evanston, IL 60208-2730, USA\\
http://www.math.northwestern.edu/\~{}gqchen} \email{\tt
gqchen@math.northwestern.edu}
\address{Beixiang Fang: Department of Mathematics, Shanghai Jiaotong University,
                Shanghai 200240, China}
\email{\tt bxfang@gmail.com}

\keywords{}
\subjclass[2000]{35L65,35L67,35M10,35B35,76H05,76N10}
\date{\today}
\thanks{}
\dedicatory{Dedicated to Professor Tatsien Li on the Occasion of His
70th Birthday}

\begin{abstract}
For an upstream supersonic flow past a straight-sided cone in $\R^3$
whose vertex angle is less than the critical angle, a transonic
(supersonic-subsonic) shock-front attached to the cone vertex can be
formed in the flow.
In this paper we analyze the stability of transonic shock-fronts in
three-dimensional steady potential flow past a perturbed cone. We
establish that the self-similar transonic shock-front solution is
conditionally stable in structure with respect to the conical
perturbation of the cone boundary and  the upstream flow in
appropriate function spaces.
In particular, it is proved that
the slope of the
shock-front tends asymptotically to the slope of the unperturbed
self-similar shock-front downstream at infinity.

In order to achieve these results, we first formulate the stability
problem as a free boundary problem and then introduce a coordinate
transformation to reduce the free boundary problem into a fixed
boundary value problem for a singular nonlinear elliptic system. We
develop an iteration scheme that consists of two iteration mappings:
one is for an iteration of approximate transonic shock-fronts; and
the other is for an iteration of the corresponding boundary value
problems of the singular nonlinear systems for the given approximate
shock-fronts. To ensure the well-definedness and contraction
property of the iteration mappings, we develop an approach to
establish the well-posedness for a corresponding singular linearized
elliptic equation, especially the stability with respect to the
coefficients of the elliptic equation, and to obtain the estimates
of its solutions reflecting both their singularity at the cone
vertex and decay at infinity. The approach is to employ key features
of the equation, introduce appropriate solution spaces, and apply a
Fredholm-type theorem to establish the existence of solutions by
showing the uniqueness in the solution spaces.
\end{abstract}
\maketitle

\section{Introduction}

We study the stability of transonic shock-fronts in
three-dimensional steady potential flow past a perturbed cone.
The steady potential equations with cylindrical symmetry with
respect to the $x$-axis can written as
\begin{equation}\label{PEq}
\lb{\begin{aligned}&\pt{x}(\rho u) + \pt{y}(\rho v) + \frac{\rho
v}{y} = 0,\\&\pt{x}v - \pt{y}u = 0,
\end{aligned}}
\end{equation}
together with Bernoulli's law:
\begin{equation}\label{BL}
   \frac 12 (u^2+v^2) + \frac{1}{\gamma - 1}\rho^{\gamma-1} =
   \kappa_\infty,
\end{equation}
where $\kappa_\infty:=\frac12u_\infty^2+
\frac1{\gamma-1}\rho_\infty^{\gamma-1}$ is determined by the
upstream flow state at infinity, i.e., the density $\rho_\infty$ and
velocity $(u_\infty,0)$, and $y$ is the distance of the flow
location in $\R^3$ to the $x$-axis. In \eqref{BL}, we have used the
pressure-density relation:
\begin{equation}\label{PGas}
p=\frac{\rho^\gamma}{\gamma}, \qquad \gamma>1,
\end{equation}
so that the sound speed $c=\rho^{(\gamma-1)/2}$.

For an upstream supersonic flow past a straight-sided cone, a
shock-front is formed in the flow. When the vertex angle of the cone
is less than the critical angle, the shock-front may be self-similar
and attached to the cone vertex. There are two kinds of admissible
shock-fronts depending on the downstream condition at infinity (cf.
Courant-Friedrichs \cite{CoF}, Chapter VI): transonic
(supersonic-subsonic) shock-fronts and supersonic-supersonic
shock-fronts. In this paper, we are interested in the stability of
the transonic shock-front, behind which the flow is completely
subsonic (see Fig. \ref{fig:1}).
%
%
%
%
More precisely, for fixed upstream density $\rho_\infty>0$ at
infinity, our problem is to understand the stability of self-similar
transonic shock-front when the speed of the upstream flow velocity
$(u_\infty,0)$ is large, equivalently, when the Mach number
$M_\infty:=\frac{u_\infty}{c_\infty}$ is large.

\begin{figure}[htbp]
\begin{center}
\includegraphics[height=1.9in,width=3in]{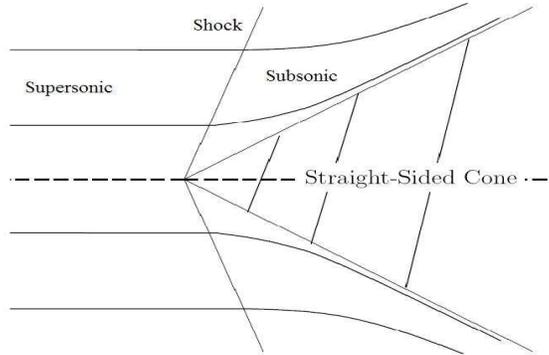}
\caption{A self-similar transonic shock in three-dimensional steady
flow past a straight-sided cone} \label{fig:1}
\end{center}
\end{figure}

By scaling the state variables $(u,v,\rho)\to (\tilde{u}, \tilde{v},
\tilde{\rho})$:
\begin{equation}\label{scaling}
(\tilde{u}, \tilde{v}, \tilde{\rho})=(\frac{u}{u_\infty},
\frac{v}{u_\infty}, \frac{\rho}{u_\infty^{2/(\gamma-1)}}),
%
\end{equation}
the corresponding sound speed becomes
$\tilde{c}=\frac{c}{u_\infty}$; the equations in \eqref{PEq} remain
unchanged for the new variables $(\tilde u, \tilde v, \tilde\rho)$,
and the Bernoulli constant becomes $\tilde\kappa_\infty:=\frac12+
\frac1{\gamma-1}\tilde\rho_\infty^{\gamma-1}$. Therefore, without
loss of generality, we can drop `` $\tilde{}\ $'' for notational
convenience hereafter to assume that $u_\infty=1$, the Bernoulli
constant is
\begin{equation}\label{BLT}
\kappa_\infty:=\frac12+ \frac1{\gamma-1}\rho_\infty^{\gamma-1}.
\end{equation}
%
Then we have
\begin{equation}\label{MachUp}
M_\infty^2=\rho_\infty^{-(\gamma-1)}\quad \text{ or }\quad \nu\defs
c_\infty^2=\frac1{M_\infty^2}=\rho_\infty^{\gamma-1}.
\end{equation}
Under this scaling, the problem reduces to the stability problem for
self-similar transonic shock-fronts in transonic flow past a
perturbed cone, governed by \eqref{PEq}--\eqref{BL} with the
Bernoulli constant \eqref{BLT}, when the Mach number $M_\infty$ of
the upcoming flow is sufficiently large, or equivalently, the
density $\rho_\infty$ is sufficiently small.

\smallskip Conical flow (i.e. cylindrically symmetric flow
with respect to an axis, say, the $x$-axis) occurs in many physical
situations. For instance, it occurs at the conical nose of a
projectile facing a supersonic stream of air (cf. \cite{CoF}).
The study of supersonic-supersonic shock-fronts was initiated in Gu
\cite{Gu}, Schaeffer \cite{Schaeffer}, and Li \cite{Li} first for
the wedge case; also see Chen \cite{Chen2,Chen3,Sxchen1}, Zhang
\cite{Zh1,Zh2}, and Chen-Zhang-Zhu \cite{ChenZhangZhu} for the
recent results. The stability of conical supersonic-supersonic
shock-fronts has been studied in the recent years in Liu-Lien
\cite{LL}
in the class of $BV$ solutions when the cone vertex angle is small,
and Chen \cite{Sxchen2} and Chen-Xin-Yin \cite{CXY} in the class of
smooth solutions away from the conical shock-front when the
perturbed cone is sufficiently close to the straight-sided cone.

The stability of transonic shock-fronts in three-dimensional steady
flow past a perturbed cone has been a longstanding open problem.
Some progress has been made for the wedge case in two-dimensional
steady flow in Chen-Fang \cite{ChenFang} and Fang \cite{Fang}. In
particular, in \cite{ChenFang,Fang}, it was proved that the
transonic shock is conditionally stable under perturbation of the
upstream flow and/or perturbation of wedge boundary. Also see
\cite{CCF,ChenFeldman1,ChenFeldman2,Sxchen3,XY,Yuan1,Yuan2} for
steady transonic flow in multidimensional nozzles.

For the two-dimensional wedge case, the equations do not involve
singular terms and the flow past the straight-sided wedge is
piecewise constant. However, for the three-dimensional conical case,
the governing equations have a singularity at the cone vertex and
the flow past the straight-sided cone is self-similar, but is no
longer piecewise constant.
These cause additional difficulties for the stability problem. In
this paper, we develop techniques to handle the singular terms in
the equations and the singularity of the solutions.

Our main results indicate that the self-similar transonic
shock-front is conditionally stable with respect to the conical
perturbation of the cone boundary and the upstream flow in
appropriate function spaces.
That is, it is proved that the transonic shock-front and downstream
flow in our solutions are close to the unperturbed self-similar
transonic shock-front and downstream flow under the conical
perturbation, and
the slope
of the shock-front asymptotically tends to the slope of the
unperturbed self-similar shock at infinity.

In order to achieve these results, we first formulate the stability
problem as a free boundary problem and then introduce a coordinate
transformation to reduce the free boundary problem into a fixed
boundary value problem for a singular nonlinear elliptic system. We
develop an iteration scheme that consists of two iteration mappings:
one is for an iteration of approximate transonic shock-fronts; and
the other is for an iteration of the corresponding boundary value
problems for the singular nonlinear systems for given approximate
shock-fronts. To ensure the well-definedness and contraction
property of the iteration mappings, it is essential to establish the
well-posedness for a corresponding singular linearized elliptic
equation, especially the stability with respect to the coefficients
of the equation,  and obtain the estimates of its solutions
reflecting their singularity at the cone vertex and decay at
infinity. The approach is to employ key features of the equation,
introduce appropriate solution spaces, and apply a Fredholm-type
theorem in Maz'ya-Plamenevski\v{\i} \cite{MP} to establish the
existence of solutions by showing the uniqueness in the solution
spaces.
%
%

The organization of this paper is as follows. In Section 2, we
exploit the behavior of self-similar transonic shocks and
corresponding transonic flows past straight-sided cones, governed by
\eqref{PEq}--\eqref{BL} with Bernoulli constant \eqref{BLT}. In
Section 3, we first formulate the stability problem as a free
boundary problem, then introduce a coordinate transformation to
reduce the free boundary problem into a fixed boundary value
problem, and finally state the main theorem (Theorem 3.1) of this
paper and its equivalent theorem (Theorem 3.2).

In Section 4, we establish the well-posedness for a singular linear
elliptic equation, which will play an important role for
establishing the main theorem, Theorem \ref{th:FixB}. In Section 5,
we develop our iteration scheme for the stability problem, which
includes two steps: one is an iteration of approximate transonic
shock-fronts; and the other is the iteration of the corresponding
nonlinear boundary value problems for given approximate
shock-fronts. In Sections 6--7, we prove that the two iteration
mappings in the iteration scheme are both well-defined, contraction
mappings, based on the well-posedness theory for a singular linear
elliptic equation established in Section 4. This implies that there
exists a unique fixed point of each iteration mapping leading to the
completion of the proof of the main theorem, Theorem 3.1.

We remark that all the results for the case $\gamma>1$ is valid for
the isothermal case $\gamma=1$ as the limiting case when $\gamma\to
1$, which can be checked step by step in the proofs.

\section{Self-similar transonic shocks and corresponding transonic flows
          past straight-sided cones}

In this section, we  exploit the behavior of self-similar transonic
shocks and corresponding transonic flows past straight-sided cones,
governed by \eqref{PEq}--\eqref{BL} with Bernoulli constant
\eqref{BLT}.

Let the turning angle of the velocity field right behind the
self-similar shock-front $\mathcal{S}$ be $\phi_1$ and set
$b=\tg\phi_1$. Then $v=bu$ for the velocity field $(u,v)$ of the
flow right across $\mathcal{S}$. Assume that the angle between $\mS$
and the upcoming velocity field $(1,0)$ is $\omega_1$ and set
$\tau=\ctg\omega_1$. Then the Rankine-Hugoniot conditions on $\mS$
are
\begin{equation}\label{BgS04}
[\rho u]=\tau[\rho v],\qquad -[v] =\tau[u].
\end{equation}
Using \eqref{BgS04} and the relation $v=bu$, we have
\begin{equation}\label{BgS05}
u=\frac{\tau }{b+\tau},\qquad v=\frac{b\tau }{b+\tau},\qquad
\rho=\frac{b+\tau}{\tau(1-b\tau)}\rho_\infty.
\end{equation}

Substitute \eqref{BgS05} into Bernoulli's law with Bernoulli
constant \eqref{BLT}
and use $\nu=\frac1{M_\infty^2}$. Then a direct computation yields
\begin{equation}\label{BgS06}
0=F(\tau,\nu):=\tau - \frac{b+\tau}{1-b\tau}\Big(
\frac{(\gamma-1)(1+2\tau/b-\tau^2)}{2(1+\tau/b)^2} +
\nu\Big)^{-\frac{1}{\gamma-1}}\nu^{\frac{1}{\gamma-1}}.
\end{equation}

For $\gamma>1$ and $b>0$, we have
$$
F(0,0)=0, \qquad \pt{\tau}F(0,0)=1\neq0.
$$
Then the implicit function theorem implies that, in a neighborhood
of $(0,0)$, $\tau$ can be expressed as a function of $\nu$, that is,
there exists a positive constant $\nu_0$ such that
$$
\tau=\tau(\nu) \qquad\,\, \mbox{for}\,\, \nu\in[0,\nu_0].
$$
Furthermore, there exist positive constants $\alpha_1$ and
$\alpha_2$ such that, for any $\nu\in[0,\nu_0]$, we have
\begin{equation}\label{BgS07}
\alpha_1\nu^{\frac1{\gamma-1}}\leq\tau(\nu)\leq
\alpha_2\nu^{\frac1{\gamma-1}}.
\end{equation}
By \eqref{BgS05}, we conclude
\begin{equation}\label{BgS08}
u=O(1)\nu^{\frac1{\gamma-1}}\sTo0,\quad
v=O(1)\nu^{\frac1{\gamma-1}}\sTo0,\quad  \rho=O(1)\qquad\,\, \text{
as }\nu\sTo0,
\end{equation}
where $O(1)$ depends only on $\gamma$ and $b$. Thus,
\begin{equation}\label{BgS09}
M^2=\frac{q^2}{\rho^{\gamma-1}}=O(1)\nu^{\frac2{\gamma-1}}\sTo0
\qquad \text{ as }\, \nu\sTo0,
\end{equation}
where $q=\sqrt{u^2+v^2}$ is the flow speed and $O(1)$ depends only
on $\gamma$ and $b$.

We now analyze the flow field between the self-similar shock-front
$\mS$ and the straight-sided cone. Let $\omega_0$ be the vertex
angle of the cone and $\kappa=\ctg\omega_0$. Since the equations and
the boundary conditions are invariant under the scaling $(x,y)\to
(\alpha x, \alpha y), \alpha\ne 0$, we seek self-similar solutions
$(u,v)=(u, v)(\sigma), \sigma=\displaystyle x/y$, as in \cite{CoF}.
Then the flow field $(u,v)$ between the shock-front $\mS$ and the
cone $y=\kappa x$ is determined by the following free boundary value
problem:
\begin{align}
&\lb{\begin{aligned}
&\pt{\sigma}v + \sigma \pt{\sigma}u =
0,\\
&\big(1-\frac{u^2}{c^2}\big)\pt{\sigma}u - \big(\frac{2uv}{c^2} +
\sigma (1 - \frac{v^2}{c^2})\big)\pt{\sigma}v + v=0,\end{aligned}}
&&\text{ for }\sigma \in(\tau, \kappa),\label{315}\\
&(u,v)=\pr{u_S,v_S},&&\text{ on }\sigma=\tau,\label{316}\\
&u-\kappa v=0,&&\text{ on }\sigma=\kappa,\label{317}
\end{align}
where $\omega_0$ or $\kappa$ is unknown and determined together with
the solution, $\tau$ and $(u_S,v_S;\rho_S)$ are determined by the
shock polar and the flow direction $b$ right behind the shock-front
$\mS$ which are given in \eqref{BgS05}, and the density $\rho$ is
determined by Bernoulli's law with Bernoulli constant \eqref{BLT}.

\begin{figure}[htbp] \begin{center}
\includegraphics[height=2.2in,width=3in]{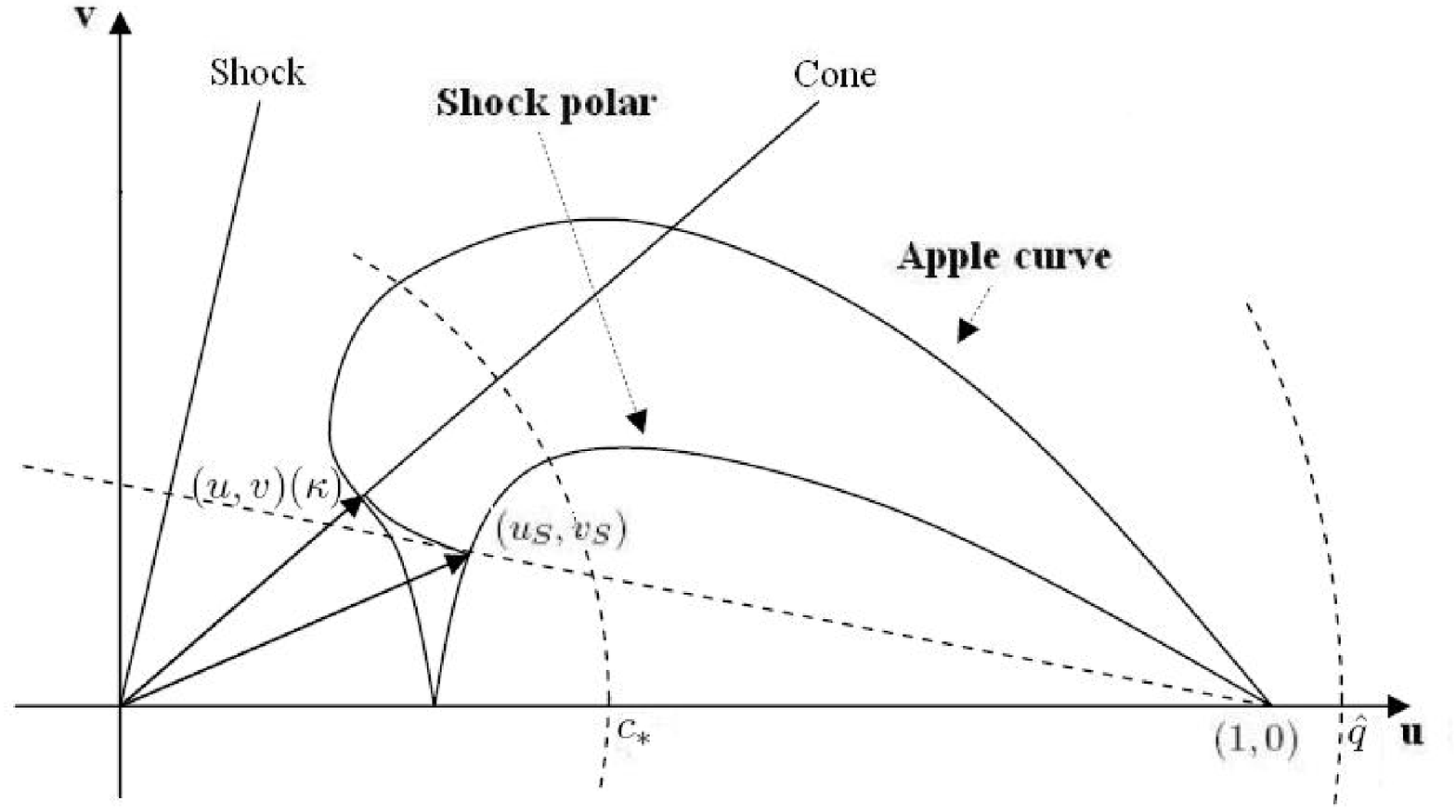}
\caption{Apple curve and shock polar for the self-similar solutions
} \label{fig:2}
\end{center}
\end{figure}

By \cite{CoF}, there exists a vertex angle $\omega_0=\omega_0(b)$ of
the cone and the corresponding self-similar solution $(u_0,
v_0)(\sigma),\ \sigma\in[\tau,k]$, between the shock-front and the
cone as the solution of the free boundary value problem
\eqref{315}--\eqref{317}. We assume that the flow between the
shock-front and the cone is subsonic, which is the case when
$M_\infty$ is large (equivalently, $\rho_\infty$ is small).
In this
case, we employ \eqref{315} to obtain
\[\begin{split}
&\Big(\big(1-\frac{u^2}{c^2}\big) + \frac{2uv}{c^2}\sigma +
\big(1-\frac{v^2}{c^2})\sigma^2\Big)\pt{\sigma}u + v = 0,\\
&\Big(\big(1-\frac{u^2}{c^2}\big)+ \frac{2uv}{c^2}\sigma +
\big(1-\frac{v^2}{c^2})\sigma^2\Big)\pt{\sigma}v - \sigma v = 0,\\
&\Big(\big(1-\frac{u^2}{c^2}) + \frac{2uv}{c^2}\sigma +
\big(1-\frac{v^2}{c^2}\big)\sigma^2\Big)\pt{\sigma}\big(\frac{q^2}{2}\big)
+ v(u-\sigma v) = 0,
\end{split}\]
where $q=\sqrt{u^2+v^2}$ is the flow speed. It is easy to verify
that
$$
u_0(\sigma)>0, \qquad v_0(\sigma)>0,
$$
and $u_0(\sigma)$, $q_0(\sigma)$, and the Mach number $M_0(\sigma)$
are strictly decreasing, while $v_0(\sigma)$ is strictly increasing,
with respect to $\sigma$. Therefore, we have
\begin{eqnarray*}
&&b=\frac{v(\tau)}{u(\tau)}< \frac{v(\kappa)}{u(\kappa)}=
\frac{1}{\kappa}=\tg\omega_0,\quad \text{ i.e.},\,\, \,\,
0<\kappa<\frac{1}{b},
\label{backg-1}\\
&&\max\limits_{\sigma\in[\tau,\kappa]}u_0(\sigma)= u_0(\tau),\label{backg-2}\\
&&\max\limits_{\sigma\in[\tau,\kappa]}v_0(\sigma)=
v_0(\kappa)< u_0(\tau)\tg\omega_0,\label{backg-3}\\
&&\max\limits_{\sigma\in[\tau,\kappa]}q_0(\sigma)\leq
q_0(\tau),\label{backg-4}\\
&&\max\limits_{\sigma\in[\tau,\kappa]}M_0(\sigma)\leq M_0(\tau)<1.
\label{backg-5}
\end{eqnarray*}

In the next sections, we develop a nonlinear iteration scheme and
establish the stability of self-similar transonic shocks under
perturbation of the upstream supersonic flow and the boundary
surface of the straight-sided cone.

\section{Stability Problem and Main Theorem}

In this section we first formulate the stability problem as a free
boundary value problem, then introduce a coordinate transformation
to reduce the free boundary problem into a fixed boundary value
problem, and finally state the main theorem (Theorem 3.1) of this
paper and its equivalent theorem (Theorem 3.2).

\subsection{Formulation of the stability problem}

The stability problem can be formulated as the following free
boundary problem.

\medskip
{\bf Problem I: Free boundary problem}. Determine the free boundary
$S=\{x=\phi(y)\}$ and the velocity field $(u,v)$ in the unbounded
domain $\{\phi(y)<x<\varphi^{-1}(y)\}$
satisfying the equations:
\begin{equation}
\label{202}
\lb{\begin{aligned}
&\pt{x}(\rho u) + \pt{y}(\rho v) +
\frac{\rho v}{y} = 0,\\
&\pt{x}v - \pt{y}u = 0,
\end{aligned}} \qquad\qquad \text{
in } \{\phi(y)<x<\varphi^{-1}(y)\},
\end{equation}
the free boundary conditions on $S$:
\begin{eqnarray}
&\label{203-1}\quad[\rho u][u] + [\rho v][v] = 0,\\
&\label{203-2}\quad-[v] = [u] \phi'(y),
\end{eqnarray}
and the slip boundary condition on the boundary surface of the
perturbed cone, $B=\{y=\varphi(x)\}$:
\begin{equation}\label{204}
v-\varphi'(x)\, u = 0 \qquad\quad \text{ on } B,
\end{equation}
where the density $\rho$ can be expressed as  a function of the
velocity $(u,v)$ by Bernoulli's law:
\begin{equation}\label{eq:Density}
\rho=\rho(q)=\Big(\tilde{\kappa}_\infty-\frac{\gamma-1}2q^2\Big)^{\frac1{\gamma-1}},
\end{equation}
with $q=\sqrt{u^2+v^2}$ and
$\tilde{\kappa}_\infty=(\gamma-1)\kappa_\infty$.

\medskip The equations in \eqref{202} can be rewritten in the
matrix form:
\begin{equation}\label{205}
A(U)\pt{x}U + B(U)\pt{y}U + C(y)U = 0,
\end{equation}
where $U=(u,v)^\top$ and
\[
A(U) =
\begin{pmatrix}1-\displaystyle\frac{u^2}{c^2}&-\displaystyle\frac{uv}{c^2}\\0&1\end{pmatrix},
\quad
B(U) =
\begin{pmatrix}-\displaystyle\frac{uv}{c^2}&1-\displaystyle\frac{v^2}{c^2}\\-1&0\end{pmatrix},
\quad
C(y)=\begin{pmatrix}0&\displaystyle\frac{1}{y}\\0&0\end{pmatrix}.
\]

To solve the free boundary problem ({\bf Problem I}), we introduce
the following coordinate transformation:
$$
\Pi_{\phi,\varphi}:\ (x,y)\mapsto(\xi,\eta)
$$
to fix the free boundary:
\begin{equation}\label{206}
\Pi_{\phi,\varphi}: \ \lb{\begin{aligned} &\xi - \eta\ctg\omega_1 =
x -
\phi(y),\\
&\eta - \xi\tg\omega_0 = y - \varphi(x).
\end{aligned}}
\end{equation}
Then the free boundary $S$ becomes a fixed boundary $\Gamma_1 =
\set{\xi=\eta\ctg\omega_1}$, and the domain
$\{\phi(y)<x<\varphi^{-1}(y)\}$ becomes a fixed domain
$$
\Omega = \set{\eta\ctg\omega_1<\xi<\eta\ctg\omega_0} =
\set{(r,\theta):\ \omega_0<\theta<\omega_1}.
$$
In transformation
\eqref{206}, $\phi$ as a function of $y$ is unknown and can be also
considered as a function of $\eta$ in the following way:
$$
\psi(\eta)\defs \phi(y(\eta\ctg\omega_1,\eta)).
$$

Then the transformation is written as
\begin{equation}\label{eq:FixBT}
\Pi_{\psi,\varphi}:\ \lb{\begin{aligned} &\xi - \eta\ctg\omega_1 = x
- \psi(\eta),
\\ &\eta - \xi\tg\omega_0 = y - \varphi(x).\end{aligned}}
\end{equation}
In the case that $\psi(\eta)$ is known, we can obtain the expression
of $\phi(y)$ from \eqref{eq:FixBT}. In fact, substituting
$\xi=\eta\ctg\omega_1$ into \eqref{eq:FixBT}, we have
\[
x=\psi(\eta),\qquad y=(1-\tg\omega_0\ctg\omega_1)\eta +
\varphi\circ\psi(\eta).
\]
Thus,
\[
\frac{\dif y}{\dif\eta}=1-\tg\omega_0\ctg\omega_1 +
\varphi'\,\dot{\psi},
\]
where $\varphi'=\displaystyle\frac{\dif\varphi(x)}{\dif x}$ and
$\dot\psi=\displaystyle\frac{\dif\psi(\eta)}{\dif\eta}$. In our
case, $\varphi'$ and $\dot\psi$ should be small perturbations to
$\tg\omega_0$ and $\ctg\omega_1$, respectively. Hence, we have
$\displaystyle\frac{\dif y}{\dif\eta}>0$, and $\eta$ can be also
expressed as a function of $y$, i.e.  $\eta=\eta(y)$. Then
$\phi(y)=\psi(\eta(y))$ is what we need. Therefore, we consider the
transformation with formulation \eqref{eq:FixBT} from now on. Then
we have
\begin{equation}\label{eq:FixBT-1}
y-\eta = \varphi(x) - \xi\tg\omega_0 =
\big(\varphi(x)-x\tg\omega_0\big) +
\tg\omega_0\big(\psi(\eta)-\eta\ctg\omega_1\big).
\end{equation}

A direct calculation indicates that the Jacobian matrix of the
transformation is
\begin{equation}\label{Eq:Jacobi1}
\frac{\partial(\xi,\eta)}{\partial(x,y)} =\frac{1}{1 -
\tg\omega_0(\ctg\omega_1-\dot\psi)}
\begin{pmatrix}1-\varphi'(x)(\ctg\omega_1 - \dot\psi)&\ctg\omega_1-
\dot\psi\\
\tg\omega_0-\varphi'(x) &1\end{pmatrix},
\end{equation}
or
\begin{equation}\label{Eq:Jacobi2}
\frac{\partial(x,y)}{\partial(\xi,\eta)} =
\begin{pmatrix}1&\dot\psi- \ctg\omega_1\\
\varphi'(x)-\tg\omega_0
&1+\varphi'(x)(\dot\psi-\ctg\omega_1)\end{pmatrix}.
\end{equation}
Then, under the transformation, system \eqref{205} becomes
\begin{equation}\label{208}
A(U)\pt{\xi}U + B(U)\pt{\eta}U + C(\eta)U = \ff(U;\psi) \qquad\text{
in }\Omega,
\end{equation}
where
$$
C(\eta)=\begin{pmatrix}0&\displaystyle {1}/{\eta}\\0&0\end{pmatrix},
$$
and
\[
\ff(U;\psi) = \tilde C(\eta;\psi)U - \tilde A(U;\psi)D_2U - \tilde
B(U;\psi)D_1U,
\]
and
\begin{eqnarray*}
&&\tilde A(U;\psi):=\frac{\tg\omega_0 - \varphi'}{1 -
\tg\omega_0(\ctg\omega_1-\dot\psi)}A(U),\\
&&\tilde B(U;\psi):=\frac{\ctg\omega_1 - \dot\psi}{1 -
\tg\omega_0(\ctg\omega_1-\dot\psi)}B(U),\\
&&\tilde C(\eta;\psi):=\begin{pmatrix}0&\displaystyle\frac{1}{\eta}-\frac{1}{y(\eta;\psi)}\\
0&0\end{pmatrix},\\
&&(D_1, D_2) :=(\pt{\xi} + \tg\omega_0\,\pt{\eta},
(\ctg\omega_1-\dot\psi)\,\pt{\xi} + \pt{\eta}).
\end{eqnarray*}
Since $\psi(\eta)=\phi(y(\eta\ctg\omega_1,\eta))$, we have
\[
\dot\psi=(\pt{\xi}y\ctg\omega_1+\pt{\eta}y)\phi' =
(1-\tg\omega_0\ctg\omega_1 + \varphi'\,\dot\psi)\phi',
\]
and the boundary condition \eqref{203-2} becomes
\begin{equation}\label{eq:RHC-B}
\dot\psi=\frac{1-\tg\omega_0\ctg\omega_1}{1-\varphi'(x)\phi'(y)}\phi'(y)
= -\frac{[v](1-\tg\omega_0\ctg\omega_1)}{[u]+\varphi'(x)\,[v]}.
\end{equation}

With these, the free boundary problem \eqref{202}---\eqref{204}
becomes the following fixed boundary problem:

\medskip
{\bf Problem II: Fixed boundary problem}. Determine the functions
$(U; \psi)=(u,v; \psi)$ in the unbounded domain:
$$
\Omega:=\set{\eta\ctg\omega_1<\xi<\eta\ctg\omega_0} =
\set{(r,\theta):\ \omega_0<\theta<\omega_1}
$$
satisfying system \eqref{208} and the boundary conditions:
\eqref{203-1} and \eqref{eq:RHC-B} on $\Gamma_1:=\{\xi=\eta \ctg
\omega_1\}$  and \eqref{204} on $\Gamma_0:=\{\xi=\eta\ctg
\omega_0\}$.

\subsection{Weighted spaces for solutions}

Based on the analysis of the self-similar transonic shock solutions
in Section 2 and the behavior of solutions to elliptic equations at
infinity, it is anticipated that the solutions have singularity at
the origin and decay at infinity. Thus, we need the following
weighted spaces as posed spaces to accommodate the features of
solutions to our problem.

\medskip
Let $1<q<\infty$ and $0\le\omega_0<\omega_1\le 2\pi$. Let
\[
\D:=\set{x\in\Real^{2}:\ 0<r<\infty, \omega_0<\theta<\omega_1}
\]
be an unbounded sector, where $(r,\theta)$ are the polar
coordinates. Then the boundary of the domain $\D$ consists of two
rays:
$$
\Gamma_{0}:=\set{x\in\Real^{2}:\ \theta=\omega_0, 0<r<\infty},
\qquad \Gamma_{1}=\set{x\in\Real^{2}:\ \theta=\omega_1, 0<r<\infty}.
$$
For any $k\in\Real$, $m=0,1,\cdots$,  we define the following
weighted Sobolev spaces $W^{m,q}_{(k)}(\D)$  as subspaces of $u\in
W^{m,q}_{loc}(\D)$:
\[
W^{m,q}_{(k)}(\D)=\set{u\in W_{loc}^{m,q}(\D):\
\norm{u}_{W_{(k)}^{m,q}(\D)}<\infty},
\]
with the norms:
\begin{equation}\label{eq:WeightedSobolevNorms}
    \norm{u(r,\theta)}_{W_{(k)}^{m,q}(\D)}
    =\norm{\me^{kt}u(\me^t,\theta)}_{W^{m,q}(\Pe(\D))},
\end{equation}
where
\begin{equation}\label{3.2a}
\Pe(r,\theta)=(t, \theta):=(\ln r, \theta)
\end{equation}
is a coordinate transformation from $(r, \theta)$ to $(t, \theta)$.

Define the norms for the trace of $u$ on each ray $\Gamma_{j}$ of
the boundary of $\D$ by
\begin{equation}\label{eq:WeightedTraceNorm}
    \norm{u(r,\omega_j)}_{W^{m -1/q,q}_{(k)}(\Gamma_{j})}=
    \norm{\me^{tk}u\pr{t,\omega_j}}_{W^{m-1/q,q}(\Real)}, \qquad j=0,1.
\end{equation}
It is easy to see that there exists a constant $K$, independent of
$u$, such that
\[
\norm{u}_{W^{m -{1}/{q},q}_{(k)}(\Gamma_{j})} \leq
K\,\norm{u}_{W^{m,q}_{(k)}(\D)}.
\]

Define
\begin{equation}\label{eq:WeightedHolderNorm}
    \norm{u(r,\theta)}_{C^{m}_{(k)}(\D)}
    =\norm{\me^{kt}u(\me^t,\theta)}_{C^{m}(\Pe(\D))},
\end{equation}
and denote by $C^{m}_{(k)}(\D)$ the space of functions with norm
$\norm{\cdot}_{C^{m}_{(k)}(\D)}$.

When $q>2$ and $m\geq1$, the well-known Sobolev imbedding theorem
implies that $W^{m,q}_{(k)}(\D)$ is embedded in $C^{m-1}_{(k)}(\D)$,
i.e., there exists a constant $K$, independent of $u$, such that
\begin{equation}\label{eq:EnbeddedInequality}
\norm{u}_{C^{m-1}_{(k)}(\D)}\leq K\,\norm{u}_{W^{m,q}_{(k)}(\D)}.
\end{equation}

For functions of single variable defined in $\Real_+$, we can also
define the following similar weighted norms:
\begin{equation}\label{eq:WeightedNormsOne}
\norm{u(r)}_{W_{(k)}^{m,q}(\Real_+)}=\norm{\me^{kt}u(\me^t)}_{W^{m,q}(\Real)},\,\,\,
\norm{u(r)}_{C^{m}_{(k)}(\Real_+)}=\norm{\me^{kt}u(\me^t)}_{C^{m}(\Real)}.
\end{equation}

\subsection{Main Theorem}

The main theorem of this paper is the following.

\begin{theorem}[Main theorem]\label{th:FixB}
Let $q>2$, $1<\gamma\leq2$, and $b>0$. Let
$$
\{(1,0;\rho_\infty);(u_0,v_0)(\sigma); \phi_1=y\ctg\omega_1\},
$$
with $\sigma\in [\ctg\omega_1,\ctg\omega_0]$ and
$b=\frac{v_0(\ctg\omega_1)}{u_0(\ctg\omega_1)}$,
form a transonic shock solution to \eqref{202} when the upstream
flow $(1,0;\rho_\infty)$ past the straight-sided cone with
$y=\varphi_0(x)=x\tg\omega_0$.
Then there exist positive constants $\nu_0$, $\varepsilon_0$, $M$,
and $M_S$ ($M$ and $M_S$ are independent of $\nu_0$ and
$\varepsilon_0$) such that, if the Mach number $M_\infty$ is
sufficiently large so that
$\nu:=\rho_\infty^{\gamma-1}=1/M_\infty^2\leq\nu_0$, then, for any
$0<\varepsilon\leq\varepsilon_0$ and
$\varepsilon\ll\nu^{\frac1{\gamma-1}}$, there exists a unique
solution $(U(\xi,\eta);\psi(\eta))$ to the fixed boundary value
problem \eqref{208}, \eqref{203-1}, \eqref{eq:RHC-B}, and
\eqref{204} satisfying $\psi(0)=0$ and the following estimates:
\begin{align}
&\norm{U-U_0}_{W^{1,q}_{(0)}(\Omega)}\leq
M\varepsilon,\label{eq:PerturbDownstream}\\
&\|\dot\psi-\ctg\omega_1\|_{\Gamma_1}\leq
M_S\varepsilon,\label{eq:PerturbShock}
\end{align}
with
$\norm{\cdot}_{\Gamma_1}\defs\norm{\cdot}_{W_{(0)}^{0,q}(\Gamma_1)}
+ \norm{\cdot}_{C^{0}(\Gamma_1)}$, provided that, if the perturbed
boundary $y=\varphi(x)$ of the cone satisfies $\varphi(0)=0$ and
\begin{equation}\label{eq:PerturbCone}
\norm{\varphi'(x)-\tg\omega_0}_{C^2_{(0)}(\Real_+)} +
\norm{\varphi'(x)-\tg\omega_0}_{W^{1,q}_{(0)}(\Real_+)}\leq\varepsilon,
\end{equation}
and the perturbed upstream flow field $U^-$ satisfies
\begin{equation}\label{eq:PerturbUpstream}
\norm{U^-}_{W^{1,q}_{(0)}(\Omega_e)} +
\norm{\pt{\xi}U^-}_{C^1_{(1)}(\Omega_e)}\leq\varepsilon,
\end{equation}
where $\Omega_e:=\{\eta \cot(\omega_1+\hat{\delta}_0)<\xi<\eta
\cot(\omega_0-\hat{\delta}_0)\}$ for some small $\hat{\delta}_0>0$,
\end{theorem}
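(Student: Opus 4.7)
The plan is to prove Theorem \ref{th:FixB} by a nested iteration scheme whose analytic engine is the well-posedness theory for a singular linear elliptic equation to be developed in Section 4. The outer iteration is a fixed-point scheme for approximate shock profiles $\psi^{(n)}$: for each $\psi^{(n)}$ the inner iteration produces a flow $U^{(n)}$ solving the quasilinear system \eqref{208} with boundary conditions \eqref{203-1} on $\Gamma_1$ and \eqref{204} on $\Gamma_0$; updating the shock through the Rankine-Hugoniot condition \eqref{eq:RHC-B} then produces $\psi^{(n+1)}$, closing the loop. The target is to show that both mappings are contractions on the balls $\norm{U-U_0}_{W^{1,q}_{(0)}(\Omega)}\le M\varepsilon$ and $\norm{\dot\psi-\ctg\omega_1}_{\Gamma_1}\le M_S\varepsilon$, so that Banach's theorem yields existence and uniqueness simultaneously.

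For the inner step, I would linearize around the background $(U_0,\psi_0)$ with $\psi_0(\eta)=\eta\ctg\omega_1$, setting $W:=U-U_0$. The principal part of \eqref{208} inherits the subsonic ellipticity of $U_0$ (guaranteed by the maximum-Mach-number estimate after \eqref{backg-5}) in the sector $\Omega$, plus the $1/\eta$-singular lower-order term from $C(\eta)$ at the vertex. The right-hand side $\ff(U;\psi)$, the forcing coming from $\varphi'-\tg\omega_0$ in the slip condition \eqref{204}, the perturbed shock-polar data driven by $U^-$ in \eqref{203-1}, and the perturbation of \eqref{eq:RHC-B} in terms of $\dot\psi-\ctg\omega_1$ all enter as small source terms of size $\varepsilon$. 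Eliminating one velocity component via the irrotationality equation reduces the linearization to a single singular linear elliptic equation for a potential-like unknown in $\Omega$, which is precisely the equation handled by the $W^{m,q}_{(k)}$ theory of Section 4; its stability with respect to coefficient perturbations turns the linear solver into a contraction on the designated ball after using \eqref{eq:EnbeddedInequality}.

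For the outer step, \eqref{eq:RHC-B} defines $\dot\psi^{(n+1)}-\ctg\omega_1$ explicitly in terms of the traces of $U^{(n)}$ on $\Gamma_1$, the upstream data $U^-$, and $\varphi'$. The background bounds \eqref{BgS07}--\eqref{BgS08} keep the denominator $[u]+\varphi'[v]$ bounded away from zero for $\nu$ small, so the trace inequality following \eqref{eq:WeightedTraceNorm} converts the $W^{1,q}_{(0)}(\Omega)$-control of $W^{(n)}$ into $W^{1-1/q,q}_{(0)}(\Gamma_1)$-control of $\dot\psi^{(n+1)}-\ctg\omega_1$, which (for $q>2$) embeds into both $W^{0,q}_{(0)}(\Gamma_1)$ and $C^0(\Gamma_1)$, giving \eqref{eq:PerturbShock}. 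Reconstructing $\psi^{(n+1)}$ by integration with $\psi^{(n+1)}(0)=0$, and estimating differences of two consecutive iterates in the same norms, yields the contraction property for the composite shock map; the asymptotic flatness of $\dot\psi-\ctg\omega_1$ at $\eta\to\infty$ built into the weighted space forces the shock slope to tend to $\ctg\omega_1$, as announced.

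The main obstacle is the simultaneous control of the $1/\eta$-singularity at the cone vertex and the slow decay of elliptic solutions at infinity, which is what forces the use of the weighted spaces and the logarithmic substitution \eqref{3.2a}. The linear theory in Section 4 must produce existence via a Fredholm-type argument from Maz'ya--Plamenevski\v{\i} \cite{MP}, reducing the existence question to uniqueness in the solution space; verifying that $k=0$ is not an eigenvalue of the associated operator pencil on the cross-section $(\omega_0,\omega_1)$ (so that the index is zero) is the delicate point. A second subtlety is that the nonlinear remainder $\ff(U;\psi)$ contains products of $\dot\psi-\ctg\omega_1$ with the derivatives $D_jU$; the quantitative assumption $\varepsilon\ll\nu^{1/(\gamma-1)}$ in the statement is precisely what guarantees absorption of these terms, since by \eqref{BgS08} the background velocity components themselves are of order $\nu^{1/(\gamma-1)}$ and so control the size of the quasilinear correction relative to the elliptic leading part.
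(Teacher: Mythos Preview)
Your proposal is correct and follows essentially the same approach as the paper: a nested iteration scheme with an inner contraction $\fj$ on $O_{M\varepsilon}$ (solving the linearized first-order system, reduced via a potential to the singular second-order Neumann problem of Section~4) and an outer contraction $\fj_S$ on $\Sigma_{M_S\varepsilon}$ (updating $\dot\psi$ through \eqref{eq:RHC-B}), with the Fredholm theory of \cite{MP} and a Hartman--Wintner-type uniqueness argument on the cross-section supplying the linear solvability. One small correction: the hypothesis $\varepsilon\ll\nu^{1/(\gamma-1)}$ is not primarily used to absorb the interior remainder $\ff(U;\psi)$ but rather to control the shock-polar boundary term $g_1$ on $\Gamma_1$, since the second derivatives of $G(U;U^-)$ in $U^-$ involve negative powers of the small upstream density $\rho^-\sim\rho_\infty=\nu^{1/(\gamma-1)}$ when $\gamma>3/2$.
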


Since $\Pi_{\psi,\varphi}$ or $\Pi_{\phi,\varphi}$ is invertible, we
conclude the following equivalent result from Theorem \ref{th:FixB}.

\begin{theorem}\label{th:Stability}
Suppose that the assumptions of Theorem {\rm \ref{th:FixB}} hold.
Then there exist positive constants $\nu_0$, $\varepsilon_0$, $M$,
and $M_S$ ($M$ and $M_S$ are independent of $\nu_0$ and
$\varepsilon_0$) such that, if the Mach number $M_\infty$
is sufficiently large so that
$\nu:=\rho_\infty^{\gamma-1}=1/M_\infty^2\leq\nu_0$, then, for any
$0<\varepsilon\leq\varepsilon_0$ and
$\varepsilon\ll\nu^{\frac1{\gamma-1}}$,  there exists a unique
solution (still denoted by) $(U(x,y);\phi(y))$ to the free boundary
problem \eqref{202}--\eqref{204}, provided that, if the boundary
surface $y=\varphi(x)$ of the perturbed cone satisfies
$\varphi(0)=0$ and
\begin{equation}\label{eq:PerturbCone-a}
\norm{\varphi'(x)-\tg\omega_0}_{C^2_{(0)}(\Real_+)} +
\norm{\varphi'(x)-\tg\omega_0}_{W^{1,q}_{(0)}(\Real_+)}\leq\varepsilon,
\end{equation}
and the perturbed upstream flow field $U^-$ satisfies
\begin{equation}\label{eq:PerturbUpstream-a}
\norm{U^-}_{W^{1,q}_{(0)}(\Omega_e)} + \norm{\nabla
U^-}_{C^1_{(1)}(\Omega_e)}\leq\varepsilon,
\end{equation}
where $\Omega_e:=\{y\cot(\omega_1+\delta_0)<x<
y\cot(\omega_0-\delta_0)\}$ for some small $\delta_0>0$.
Moreover, the solution $(U(x,y); \phi(y))$ satisfies
$\phi(0)=0$ and the following estimates:
\begin{align}
&\label{eq:PerturbDS}\norm{ U\circ\Pi_{\phi,\varphi}^{-1}
-U_0\circ\Pi_{\phi_0,\varphi_0}^{-1}}_{W^{1,q}_{(0)}(\Omega)}\leq
M\varepsilon,\\
&\label{eq:PerturbS}\norm{\phi'-\ctg\omega_1}_{S}\leq
M_S\varepsilon,
\end{align}
where $\phi'=\frac{\dif\phi}{\dif y}$ and
$\norm{\cdot}_{S}:=\norm{\cdot}_{W_{(0)}^{0,q}(\Real_+)} +
\norm{\cdot}_{C^{0}(\Real_+)}$.
\end{theorem}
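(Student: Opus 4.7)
\textbf{Proof proposal for Theorem \ref{th:Stability}.} The plan is to derive Theorem \ref{th:Stability} directly from Theorem \ref{th:FixB} by exploiting the invertibility of the coordinate transformation $\Pi_{\psi,\varphi}$ defined in \eqref{eq:FixBT}. First I would verify that, under the smallness assumption \eqref{eq:PerturbCone-a} and the shock estimate \eqref{eq:PerturbShock} supplied by Theorem \ref{th:FixB}, the Jacobian determinant $1-\tg\omega_0(\ctg\omega_1-\dot\psi)$ in \eqref{Eq:Jacobi1} and the companion factor $1+\varphi'(x)(\dot\psi-\ctg\omega_1)$ in \eqref{Eq:Jacobi2} remain uniformly bounded away from zero, since $|\dot\psi-\ctg\omega_1|\leq M_S\varepsilon$ and $|\varphi'-\tg\omega_0|\leq\varepsilon$ are both small for $\varepsilon\leq\varepsilon_0$. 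Consequently $\Pi_{\psi,\varphi}$ is a global bi-Lipschitz $C^{1,\alpha}$ diffeomorphism between the fixed sector $\Omega$ and the unbounded physical domain $\{\phi(y)<x<\varphi^{-1}(y)\}$.

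Next I would recover the free shock-front $x=\phi(y)$ from $\psi$ exactly as indicated after \eqref{eq:FixBT}: substituting $\xi=\eta\ctg\omega_1$ gives $y=(1-\tg\omega_0\ctg\omega_1)\eta+\varphi(\psi(\eta))$, and since $\frac{\dif y}{\dif\eta}>0$ by the above smallness this relation inverts to produce $\eta=\eta(y)$ and $\phi(y):=\psi(\eta(y))$. Define $U(x,y):=U(\xi(x,y),\eta(x,y))$ by pulling back the solution of Theorem \ref{th:FixB} through $\Pi_{\psi,\varphi}^{-1}$. Because system \eqref{208} was produced from \eqref{205} precisely by applying $\Pi_{\psi,\varphi}$, the pull-back automatically solves \eqref{202}; the boundary conditions \eqref{203-1} on $\Gamma_1$ and \eqref{204} on $\Gamma_0$ likewise pull back to \eqref{203-1} on $S$ and \eqref{204} on $B$; and the identity $\dot\psi=(1-\tg\omega_0\ctg\omega_1+\varphi'\dot\psi)\phi'$ that yielded \eqref{eq:RHC-B} shows that \eqref{eq:RHC-B} is equivalent to \eqref{203-2} along the recovered free boundary. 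Hence $(U;\phi)$ solves the free boundary problem \eqref{202}--\eqref{204} with $\phi(0)=\psi(0)=0$.

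Finally I would translate the estimates. Since the maps $\Pi_{\phi,\varphi}$ and $\Pi_{\phi_0,\varphi_0}$ are, by construction, the very changes of variable linking physical and fixed coordinates, the quantity $U\circ\Pi_{\phi,\varphi}^{-1}-U_0\circ\Pi_{\phi_0,\varphi_0}^{-1}$ evaluated at $(\xi,\eta)\in\Omega$ coincides identically with $U(\xi,\eta)-U_0(\xi,\eta)$, so \eqref{eq:PerturbDS} is a verbatim restatement of \eqref{eq:PerturbDownstream}. For \eqref{eq:PerturbS}, the chain-rule identity above yields $\phi'(y)-\ctg\omega_1$ as an explicit algebraic function of $\dot\psi-\ctg\omega_1$ and $\varphi'-\tg\omega_0$, which both satisfy small bounds in the required weighted norms by \eqref{eq:PerturbShock} and \eqref{eq:PerturbCone-a}; combined with the bi-Lipschitz correspondence between the variables $\eta$ and $y$ on $\Gamma_1\leftrightarrow S$ and the fact that this correspondence is a near-identity perturbation of the scaling $y=(1-\tg\omega_0\ctg\omega_1)\eta$, we obtain \eqref{eq:PerturbS} (with a possibly enlarged constant $M_S$). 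The main obstacle I anticipate is purely a bookkeeping one: the weighted spaces $W^{m,q}_{(k)}$ and $C^m_{(k)}$ are defined through the logarithmic coordinate transformation $\Pe$ in \eqref{3.2a}, and one must confirm that in $(\ln r,\theta)$-coordinates the map $\Pi_{\psi,\varphi}^{-1}$ conjugates to a near-identity diffeomorphism so that the weighted norms are preserved up to a constant depending only on $\omega_0,\omega_1,\gamma,b$; this follows because $\Pi_{\psi,\varphi}$ is homogeneous of degree one modulo a perturbation controlled by $\varepsilon$.
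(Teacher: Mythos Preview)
Your proposal is correct and follows exactly the route the paper takes: the paper deduces Theorem~\ref{th:Stability} from Theorem~\ref{th:FixB} in a single line, stating only that ``since $\Pi_{\psi,\varphi}$ or $\Pi_{\phi,\varphi}$ is invertible, we conclude the following equivalent result from Theorem~\ref{th:FixB}.'' Your write-up simply unpacks that sentence---verifying the Jacobian is nondegenerate, recovering $\phi$ from $\psi$ via the relation after \eqref{eq:FixBT}, pulling back the solution, and transferring the estimates---so there is no difference in approach, only in the level of detail supplied.
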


\begin{figure}[htbp] \begin{center}
\includegraphics[height=2.5in,width=3.5in]{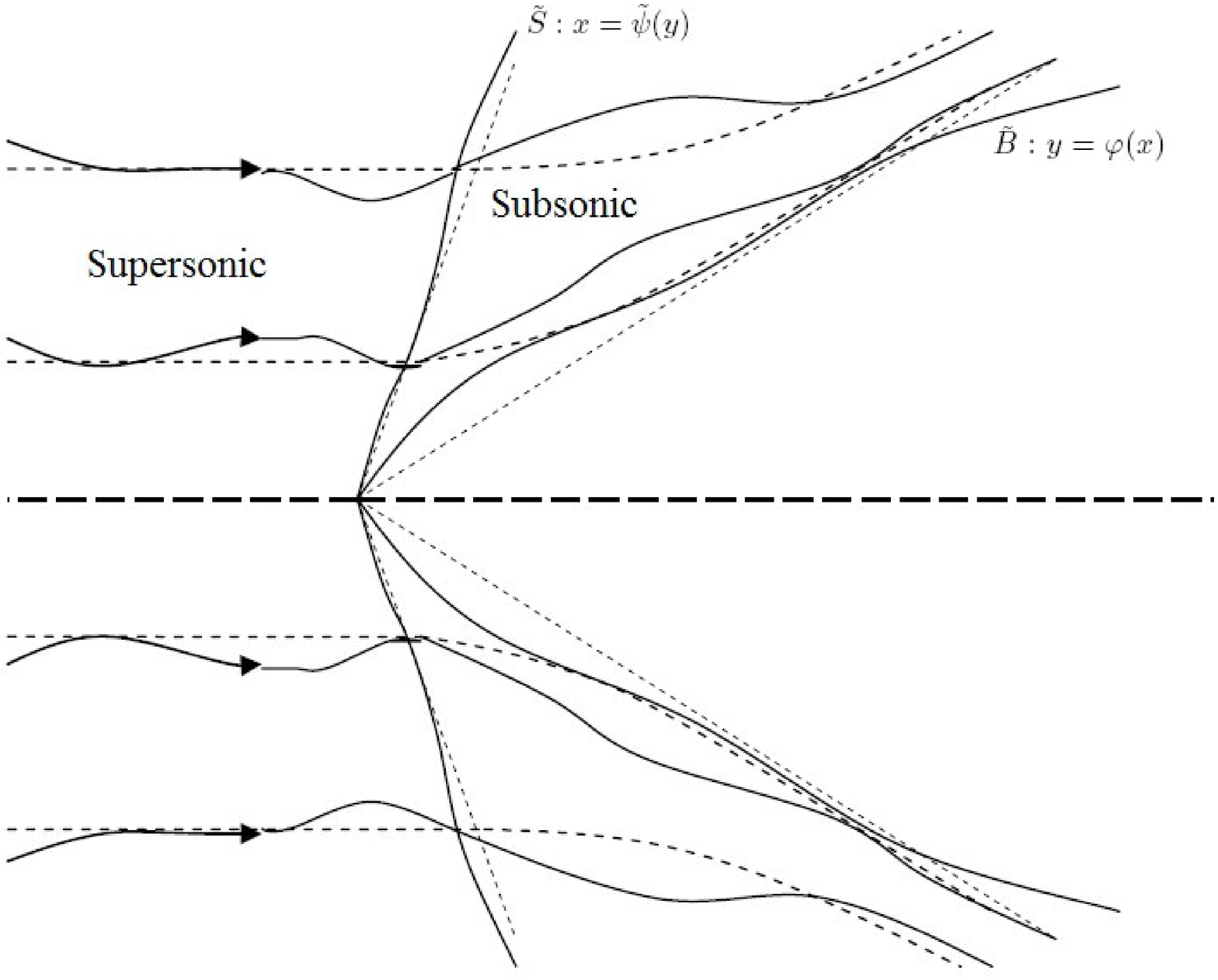}
\caption{Stability of transonic shock-front solutions} \label{fig:3}
\end{center}
\end{figure}

\begin{rem}
The existence of the perturbed upstream flow field $U^-$ satisfying
\eqref{eq:PerturbUpstream-a} can be obtained by blowing up the
angular point and then following the standard argument as in Li-Yu
\cite{LiYu}, since the equations are still quasilinear hyperbolic
under the transformation.
%
\end{rem}

\begin{rem}
Estimates \eqref{eq:PerturbDS} and \eqref{eq:PerturbS} imply that
the downstream flow and the transonic shock-front are a perturbation
of the self-similar transonic shock solution. Hence, the
self-similar transonic shock-front is conditionally stable with
respect to the conical perturbation of the boundary surface of the
cone and the upstream flow in the function spaces with restrictions
on the downstream flow field both at the corner and at infinity.
\end{rem}

\section{Well-posedness for a Singular Linear Elliptic Problem}

In this section, we establish the well-posedness for a singular
linear elliptic equation, which will play an essential role for
establishing the main theorem, Theorem \ref{th:FixB}.

Let $0<\omega_0<\omega_1<\frac{\pi}{2}$ and set
\[
\begin{split}
&\Omega:= \set{(x,y)\in\Real^2:\ 0<r<\infty,\omega_0<\theta<\omega_1},\\
&\Gamma_0:= \set{(x,y)\in\Real^2:\ 0<r<\infty,\theta=\omega_0},\\
&\Gamma_1:= \set{(x,y)\in\Real^2:\ 0<r<\infty,\theta=\omega_1},
\end{split}
\]
where $(r,\theta)$ are the polar coordinates in the plane.

\subsection{Neumann problem for a singular second-order elliptic equation}

Consider the following Neumann boundary value problem in $\Omega$:
\begin{equation}\label{001}
\lb{
\begin{aligned}
&L_0\varphi\defs\pt{xx}\varphi + \pt{yy}\varphi +
\frac{\pt{y}\varphi}{y} = f \quad & \text{ in }\Omega,\\
&B_0\varphi\defs\pt{y}\varphi - \tg\omega_0\pt{x}\varphi = g_0 &
\text{ on }\Gamma_0,\\
&B_1\varphi\defs\pt{x}\varphi - \ctg\omega_1\pt{y}\varphi = g_1 &
\text{ on }\Gamma_1.
\end{aligned} }
\end{equation}
We have the following proposition.

\begin{proposition}\label{le:001}
Let $1<q<\infty$. The operator $(L_0,B_0,B_1)$ defined in
\eqref{001} realizes an isomorphism from $W_{(-1)}^{2,q}(\Omega)$ to
$W_{(1)}^{0,q}(\Omega)\times
(W_{(0)}^{1-1/q,q}(\Real_{+}))^2$.
Moreover, we have the following
estimate for the solution to problem \eqref{001}:
\begin{equation}\label{002}
\norm{\varphi}_{W_{(-1)}^{2,q}(\Omega)} \leq
K\,\Big(\norm{f}_{W_{(1)}^{0,q}(\Omega)} +
\sum_{j=0,1}\norm{g_j}_{W_{(0)}^{1-1/q,q}(\Real_{+})}\Big),
\end{equation}
where the constant $K$  is independent of $\varphi$, but depends
only on $q$ and $\omega_0$ (actually $\ctg\omega_0$).
\end{proposition}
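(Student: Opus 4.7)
The plan is to reduce the singular boundary value problem on the sector $\Omega$ to a uniformly elliptic problem with $t$-independent coefficients on an infinite strip, and then apply the Fredholm-type theorem of Maz'ya-Plamenevski\v{\i} \cite{MP} referenced in the introduction. First I would use the logarithmic change of variable $\Pe(r,\theta)=(t,\theta)$, $t=\ln r$, from \eqref{3.2a}. By the very definition \eqref{eq:WeightedSobolevNorms}, under this change the spaces $W^{m,q}_{(k)}(\Omega)$ pull back to standard Sobolev spaces on the strip $\Pi:=\R\times(\omega_0,\omega_1)$ once the factor $e^{kt}$ is absorbed by rescaling. Setting $w(t,\theta):=e^{-t}\varphi(e^t,\theta)$, which is the rescaling that identifies $W^{2,q}_{(-1)}(\Omega)$ with $W^{2,q}(\Pi)$, and expanding $L_0$ in polar coordinates, multiplication by $r^2$ yields the constant-coefficient (in $t$) equation
\[
w_{tt}+3w_t+2w+w_{\theta\theta}+\ctg\theta\,w_\theta \,=\, e^t f(e^t,\theta) \,=:\, F(t,\theta).
\]
The oblique conditions $B_0\varphi=g_0$ and $B_1\varphi=g_1$ are (positive scalar multiples of) the inward conormal derivatives on $\Gamma_0, \Gamma_1$, and since the inward normal to a ray is the direction $\pm\hat\theta$, these boundary conditions reduce after the transformation to pure Neumann data $w_\theta=G_0(t)$ on $\theta=\omega_0$ and $w_\theta=G_1(t)$ on $\theta=\omega_1$. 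The weights $k=1$ for $f$ and $k=0$ for $g_j$ are precisely those needed so that $F$, $G_0$, and $G_1$ lie in the corresponding unweighted spaces on $\Pi$.

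Because the transformed operator has $\theta$-coefficients only, the next step is to apply the partial Fourier transform in $t$, producing for each $\lambda\in\mathbb C$ the operator pencil
\[
\mathcal A(\lambda)v := v''+\ctg\theta\,v' + \bigl(2-\lambda^2+3i\lambda\bigr)v,\qquad v'(\omega_0)=v'(\omega_1)=0,
\]
on the bounded interval $(\omega_0,\omega_1)$. By the Maz'ya-Plamenevski\v{\i} theorem, the operator $(L_0,B_0,B_1)$ is Fredholm and in fact an isomorphism between the stated weighted spaces provided $\mathcal A(\lambda)$ is invertible, as a map from $\set{v\in W^{2,q}(\omega_0,\omega_1):v'(\omega_0)=v'(\omega_1)=0}$ to $L^q(\omega_0,\omega_1)$, for every $\lambda$ on the line $\Im\lambda=0$; this is the line that corresponds to our weight choice after the rescaling $w=e^{-t}\varphi(e^t,\cdot)$.

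To verify this spectral condition I would test $\mathcal A(\lambda)v=0$ against $\sin\theta\,\overline v$ and exploit the identity $\sin\theta(v''+\ctg\theta\,v')=(\sin\theta\,v')'$. Integration by parts on $(\omega_0,\omega_1)$ kills the Neumann boundary terms and leaves
\[
-\int_{\omega_0}^{\omega_1}\sin\theta\,|v'|^2\,d\theta + \bigl(2-\lambda^2+3i\lambda\bigr)\int_{\omega_0}^{\omega_1}\sin\theta\,|v|^2\,d\theta \,=\, 0.
\]
The imaginary part $3\lambda\int\sin\theta\,|v|^2\,d\theta=0$ immediately forces $v\equiv 0$ whenever $\lambda\neq 0$. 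The remaining case $\lambda=0$ is the Legendre equation of order one, $(\sin\theta\,v')'+2\sin\theta\,v=0$, whose general solution on $(0,\pi/2)$ is $a\cos\theta+b\,Q_1(\cos\theta)$; the homogeneous Neumann constraints at $\omega_j$ become the two algebraic equations $a+b\,Q_1'(\cos\omega_j)=0$ for $j=0,1$, and strict monotonicity of $x\mapsto Q_1'(x)$ on $(0,1)$ combined with $\omega_0<\omega_1<\pi/2$ forces $a=b=0$.

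The main obstacle is precisely this spectral verification at $\lambda=0$: the energy identity alone does not close there, because $2|v|^2-|v'|^2$ has no sign, so one cannot argue by pure positivity and must invoke the explicit solvability of the $\lambda=0$ Legendre equation together with the geometric constraint $0<\omega_0<\omega_1<\pi/2$. Once the spectral condition is established, the Maz'ya-Plamenevski\v{\i} framework simultaneously delivers existence, uniqueness, and the a priori bound \eqref{002}; tracking constants through the pencil resolvent estimates then yields the dependence of $K$ only on $q$ and $\ctg\omega_0$, since the behaviour of $\ctg\theta$ near $\theta=\omega_0$ is the sole coefficient feature that becomes unbounded as the geometric parameters degenerate.
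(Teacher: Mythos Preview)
Your overall architecture---logarithmic blow-up $t=\ln r$, reduction to a translation-invariant problem on the strip, Fourier in $t$, and the Maz'ya--Plamenevski\v{\i} Fredholm theorem---is exactly the paper's framework, and your rescaling $w=e^{-t}\varphi$ is just a bookkeeping device that shifts the spectral line from $\Im\lambda=-1$ to $\Im\lambda=0$; the resulting pencil $v''+\ctg\theta\,v'+(2-\lambda^2+3i\lambda)v=0$ with Neumann data coincides with the paper's after the substitution $\lambda=\mu-i$.

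The genuine difference is in the spectral verification. The paper does \emph{not} use your weighted energy identity. Instead it differentiates once, setting $y=\hat\varphi'$, which converts the Neumann problem into a Dirichlet problem for $y$; it then writes the equation for $y$ as a real $2\times2$ system and applies a Hartman--Wintner positivity criterion (their Lemma~4.1) with the explicit choice $K(\theta)=\tg\theta\cdot I$. The resulting matrix $N=(\mu^2+\csc^2\theta-\tfrac14\ctg^2\theta)I$ is positive definite for every real $\mu$, so $y\equiv0$ and hence $\hat\varphi$ is constant; the coefficient $2-\mu^2+3i\mu\ne0$ then forces $\hat\varphi\equiv0$. This handles all $\lambda$ on the line uniformly, with no case split.

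Your route is more elementary for $\lambda\ne0$---the imaginary part of the $\sin\theta$-weighted identity kills $v$ instantly---but the price is that $\lambda=0$ requires a separate argument via the explicit Legendre basis $\cos\theta$, $Q_1(\cos\theta)$ and the monotonicity of $Q_1'$, together with the geometric hypothesis $0<\omega_0<\omega_1<\pi/2$. Both approaches are correct; the paper's buys uniformity in $\lambda$ at the cost of the auxiliary Hartman--Wintner lemma, while yours avoids that lemma but needs special-function input at the isolated frequency $\lambda=0$.
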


To prove this proposition, we employ a criterion identified by
Hartman-Wintner \cite{HW} for the uniqueness of solutions to the
Dirichlet boundary value problem for systems of second-order
differential equations. For self-containedness, we give a brief
description here; for more details, see \cite{HW}.

\begin{lemma}\label{le:ODEUniqueness}
Consider the following boundary value problem for the system of
second-order differential equations for $x\in\Real^n$:
\begin{equation}\label{eq:ODEsBVP}
\lb{\begin{aligned}&x'' + A_1(t)\, x' + A_2(t)\, x =
0\qquad\text{for}\,\,
t\in(t_0,t_1),\\
&x(t_0)=x(t_1)=0,
\end{aligned}}
\end{equation}
where $A_1(t)$ and $A_2(t)$ are $n\times n$ real matrices. Assume
that there exists a matrix $K(t)$ such that
\begin{equation}\label{eq:Criterion}
N=(K^0)'-A_2^0-\big(\frac12A_1 - K^0\big)\big(\frac12A_1^\top -
K^0)> 0,
\end{equation}
where $K^0=\frac12(K+K^\top)$ and $A_2^0=\frac12(A_2+A_2^\top)$.
Then problem \eqref{eq:ODEsBVP} has only the trivial solution
$x\equiv0$.
\end{lemma}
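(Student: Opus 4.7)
The plan is to run a classical quadratic-functional argument in the Hartman--Wintner style: test the equation against $x^\top$, re-express the result using $K(t)$ as a ``gauge'' that can be chosen freely, and eventually display the resulting identity as the sum of a manifestly nonnegative term and a term that, by the positivity of $N$, can only be nonpositive. The only way the identity can hold is then $x\equiv 0$.

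First, I would contract the ODE $x''+A_1 x'+A_2 x=0$ with $x^\top$ and integrate over $(t_0,t_1)$. The boundary conditions $x(t_0)=x(t_1)=0$ kill the boundary term in $\int x^\top x''\,dt=-\int|x'|^2\,dt$, and since $x^\top A_2 x=x^\top A_2^0 x$, this yields the identity
\begin{equation*}
\int_{t_0}^{t_1}|x'|^2\,dt \;=\; \int_{t_0}^{t_1} x^\top A_1 x'\,dt + \int_{t_0}^{t_1} x^\top A_2^0 x\,dt.
\end{equation*}
Next, for any matrix $K(t)$, set $W(t):=x^\top K^0(t) x$, which again vanishes at both endpoints, so
\begin{equation*}
0 \;=\; \int_{t_0}^{t_1} W'\,dt \;=\; \int_{t_0}^{t_1} x^\top (K^0)' x\,dt + 2\int_{t_0}^{t_1} x^\top K^0 x'\,dt.
\end{equation*}
These two identities, combined, let me eliminate $\int x^\top A_1 x'$ and $\int x^\top K^0 x'$ in favor of quadratic expressions in $x$ and $x'$ alone.

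The key step is then to consider the obviously nonnegative quantity
\begin{equation*}
I \;:=\; \int_{t_0}^{t_1}\bigl|x'-\bigl(\tfrac12 A_1^\top - K^0\bigr)x\bigr|^2\,dt \;\ge\; 0,
\end{equation*}
expand the square, and substitute the two identities above. After rearrangement, the cross term $-2\int x^\top(\tfrac12 A_1-K^0)x'\,dt$ produces $-\int x^\top A_1 x'\,dt+2\int x^\top K^0 x'\,dt$, which collapses via the first identity and the $W$-identity, leaving exactly
\begin{equation*}
I \;=\; -\int_{t_0}^{t_1} x^\top N(t)\, x\,dt,
\end{equation*}
with $N=(K^0)'-A_2^0-(\tfrac12 A_1-K^0)(\tfrac12 A_1^\top-K^0)$ as in \eqref{eq:Criterion}. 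Since $I\ge 0$ and $N>0$ forces the right-hand side to be $\le 0$, both integrals vanish; the pointwise positivity of $N$ then gives $x\equiv 0$ on $[t_0,t_1]$.

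The bookkeeping in the expansion of $I$ is the only delicate point: one must be careful that the symmetrizations $K^0=\tfrac12(K+K^\top)$ and $A_2^0=\tfrac12(A_2+A_2^\top)$ are used precisely where quadratic forms $x^\top(\cdot)x$ appear (since antisymmetric parts drop out), while $A_1$ itself, not $A_1^0$, shows up inside the square because $x^\top A_1 x'$ is not a quadratic form in $x$ alone. I expect this symmetry-tracking, rather than any analytic subtlety, to be the main place one can slip; everything else is integration by parts and the Cauchy--Schwarz-type inequality $I\ge 0$.
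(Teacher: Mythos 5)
Your argument is correct and is essentially the paper's own Hartman--Wintner quadratic-functional proof: test against $x$, use the auxiliary gauge identity $\int (x^\top K^0 x)'\,dt=0$, and expand the nonnegative integral $I=\int|x'-(\tfrac12A_1^\top-K^0)x|^2\,dt$ to reduce it to $-\int x^\top N x\,dt$. The only cosmetic difference is that you work with $K^0$ from the outset, so $(K^0)'$ appears directly, whereas the paper works with $K$ and then symmetrizes the resulting $L$ to get $N$; the two routes are equivalent.
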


\begin{proof}
Taking the inner product on the equations with $x$ and integrating
from $t_0$ to $t_1$ yields
\begin{equation}\label{eq:Criterion01}
\int_{t_0}^{t_1}\pr{x'\cdot x' - x\cdot A_1x' -x\cdot A_2x}\dif t=0.
\end{equation}
Since $(x\cdot Kx)'=2x'\cdot K^0x+x\cdot K'x$, we have
\begin{equation}\label{eq:Criterion02}
0=\int_{t_0}^{t_1}(x\cdot Kx)'\dif t=\int_{t_0}^{t_1}(2x'\cdot K^0x
+ x\cdot K'x)\, \dif t.
\end{equation}
Then
\[
\int_{t_0}^{t_1}\Big(\big|x'-\big(\frac12A_1^\top-K^0\big)x\big|^2 +
x\cdot L x\Big)\dif t = 0,
\]
where $L:= K'-A_2-\big(\frac12A_1 - K^0\big)\big(\frac12A_1^\top -
K^0)$.

Similarly, we have
\[
\int_{t_0}^{t_1}\Big(\big|x'-\big(\frac12A_1^\top-K^0\big)x\big|^2 +
x\cdot L^\top x\Big)\dif t = 0.
\]
Combining the above two identities, we obtain
\[
\int_{t_0}^{t_1}\Big(\big|x'-\big(\frac12A_1^\top-K^0\big)x\big|^2 +
x\cdot N x\Big)\dif t = 0.
\]
Since $N$ is positive definite, we conclude $x\equiv0$.
\end{proof}

\begin{proof}[Proof of Proposition {\rm \ref{le:001}}]
Rewriting the boundary value problem \eqref{001} in the polar
coordinates $(r,\theta)$, we have
\begin{equation}\label{0021}
\lb{
\begin{aligned}
&L_0\varphi=(r\pt{r})^2\varphi + \pt{\theta}^2\varphi +
r\pt{r}\varphi + \ctg\theta\, \pt{\theta}\varphi = r^2f  &\text{in }\Omega,\\
&B_0\varphi=\pt{\theta}\varphi = rg_0 & \text{ on }\Gamma_0,\\
&B_1\varphi=\pt{\theta}\varphi = rg_1 & \text{ on }\Gamma_1.
\end{aligned}
}
\end{equation}

Employing the transformation $\Pe$ in \eqref{3.2a}, i.e.,
$\Pe(r,\theta)=(t,\theta)=(\ln r, \theta)$, we convert the infinite
sector $\Omega$ into an infinite strip: $\D:=\set{(t,\theta):\
t\in\Real,\ \omega_0<\theta<\omega_1}$. Accordingly, the boundary
value problem \eqref{0021} is converted to the following boundary
value problem in $\D$:
\begin{equation}\label{0022}
\lb{
\begin{aligned}
&\pt{tt}\varphi + \pt{\theta\theta}\varphi +
\pt{t}\varphi + \ctg\theta\, \pt{\theta}\varphi = \me^{2t}f & \text{ in }\D,\\
&\pt{\theta}\varphi = \me^tg_0&
\text{ on }\Sigma_0,\\
&\pt{\theta}\varphi = \me^tg_1& \text{ on }\Sigma_1.
\end{aligned} }
\end{equation}

Applying the Fourier transformation $\ff_{t\sTo\lambda}$ with
respect to $t$, we obtain a family of boundary value problems with
complex parameter $\lambda$:
\begin{equation}\label{0023}
\lb{
\begin{aligned}
&\hat{\varphi}'' + \ctg\theta\,\hat{\varphi}' + (-\lambda^2
+ \mi\lambda)\varphi = \widehat{\me^{2t}f} \qquad & \theta\in(\omega_0,\omega_1),\\
&\hat{\varphi}' = \widehat{\me^tg_0}\qquad&
\theta=\omega_0,\\
&\hat{\varphi}' = \widehat{\me^tg_1} \qquad & \theta=\omega_1.
\end{aligned} }
\end{equation}

We now employ a Fredholm-type theorem, Theorem A.1 in Appendix,
to find that, if the homogeneous problems of \eqref{0023} (i.e.
$f=g_0=g_1=0$) have only the trivial solution $\hat{\varphi}\equiv
0$ for all $\lambda$ with $\text{Im} \lambda = -1$, then, for any
$(f,g_0,g_1)$ such that $\me^{t}f\in W^{0,q}(\D)$ and $g_j\in
W^{1-1/q,q}(\Sigma_j), j=0,1$, the boundary value problem
\eqref{0022} in the infinite strip $\D$ has a unique solution
$\varphi$ such that $\me^{-t}\varphi\in W^{2,q}(\D)$. Moreover, the
solution $\varphi$ satisfies the estimate:
\begin{equation}
\norm{\me^{-t}\varphi}_{W^{2,q}(\D)}\leq K\,
\Big(\norm{\me^{t}f}_{W^{0,q}(\D)} +
\sum_{j=0,1}\norm{g_j}_{W^{1-1/q,q}(\Sigma_j)}\Big),
\end{equation}
where $K$ is independent of $\varphi$, but does depend on $\omega_0$
because of the coefficient $\ctg\theta$. Then the results of
Proposition \ref{le:001} follow. Therefore, it suffices to verify
that, in the case that $f= g_1=g_2\equiv0$, the boundary value
problems \eqref{0023} with complex parameter $\lambda$, $\text{Im}
\lambda=-1$, have only the trivial solution $\hat{\varphi}\equiv0$.

When $f= g_0=g_1\equiv0$, we set $y=\hat{\varphi}'$ to have
\begin{equation}\label{0024}
\lb{
\begin{aligned}
&y'' + \ctg\theta\ y' + (-\csc^2\theta-\lambda^2
+ \mi\lambda)\,\varphi = 0 \qquad& \text{for }\, \theta\in(\omega_0,\omega_1),\\
&y = 0 &\text{for}\,\, \theta=\omega_0,\\
&y = 0& \text{for}\,\, \theta=\omega_1.
\end{aligned} }
\end{equation}

Write $y=y_1+ y_2 \mi$ and $\lambda=\mu-\mi$. Then
$-\lambda^2+\lambda \mi = -\mu^2 + 2 + 3\mu \mi$, and \eqref{0024}
can be rewritten as the following boundary value problems of
second-order differential equations with real coefficients:
\begin{equation}\label{0025}
\lb{
\begin{aligned}
&\begin{aligned} &\begin{pmatrix}y_1\\y_2\end{pmatrix}'' +
\begin{pmatrix}\ctg\theta&0\\0&\ctg\theta\end{pmatrix}
\begin{pmatrix}y_1\\y_2\end{pmatrix}' \\
&\quad+ \begin{pmatrix}-\csc^2\theta - \mu^2 +
2&-3\mu\\3\mu&-\csc^2\theta - \mu^2 + 2\end{pmatrix}
\begin{pmatrix}y_1\\y_2\end{pmatrix} = 0 \qquad
\,\,\text{for}\,\, \theta\in(\omega_0,\omega_1),
\end{aligned}
\\
&\begin{pmatrix}y_1\\y_2\end{pmatrix} =
0\qquad\quad\qquad\qquad\qquad\qquad\qquad\qquad\qquad\qquad\qquad
\quad\text{on}\,\,\theta=\omega_0, \omega_1.
\end{aligned}
}
\end{equation}

Let $K(\theta)=K^0(\theta)=\begin{pmatrix} \tg\theta&0\\0&
\tg\theta\end{pmatrix}$.
Then
$$
N=\begin{pmatrix} a(\theta,\mu)&0\\0& a(\theta,\mu)\end{pmatrix},
$$
where
\[
\begin{split}
a(\theta,\mu) :=&(\tg\theta)' - (- \csc^2\theta - \mu^2 + 2)
-(\frac{1}{2}\ctg\theta - \kappa)^2\\
=&\mu^2 + \csc^2\theta - \frac{1}{4}\ctg^2\theta>0,\\
\end{split}
\]
which implies that the symmetric matrix $N$ is positive definite and
hence satisfies the criterion \eqref{eq:Criterion}. By Lemma
\ref{le:ODEUniqueness}, we obtain that $y\equiv0$. That is,
$\hat{\varphi}\equiv const.$  Then the equations in \eqref{0023}
yields that $\hat{\varphi}\equiv 0$ for any $\text{Im}\lambda=-1$.
This completes the proof.
\end{proof}

Proposition \ref{le:001} can be directly applied to a special
boundary value problem of first-order partial differential
equations.

\subsection{A boundary value problem of a singular first-order elliptic system}

Consider the boundary value problem for the first-order system:
\begin{align}
&\hat A\,\pt{x}U + \hat B\,\pt{y}U + \hat C\, U = F &&\text{ in }\Omega,\label{eq:BasicEqs}\\
&U\cdot\hat\alpha_0=g_0 &&\text{ on }\Gamma_0,\label{eq:BasicBC1}\\
&U\cdot\hat\alpha_1=g_1 &&\text{ on }\Gamma_1,\label{eq:BasicBC2}
\end{align}
where $U=(u,v)^\top$, $\hat\alpha_0=(-\tg\omega_0,1)^\top$,
$\hat\alpha_1=(1,-\ctg\omega_1)^\top$, $F=(f_1,f_2)^\top$, and
\begin{equation}\label{4.16}
\hat A=\begin{pmatrix}1&0\\0&1\end{pmatrix},\qquad \hat
B=\begin{pmatrix}0&1\\-1&0\end{pmatrix},\qquad \hat
C=\begin{pmatrix}0&\frac1y\\0&0
\end{pmatrix}.
\end{equation}

To solve this problem, we first construct a function $\Phi\in
W^{2,q}_{(-1)}(\Omega)$ such that
\begin{equation}\label{LT01}
\lb{\begin{aligned} &\triangle\Phi = f_2 &\text{ in }\Omega,\\ &\Phi
= 0&\text{ on }\Gamma_0,\\ &\Phi = 0 &\text{ on }\Gamma_1.
\end{aligned}}
\end{equation}
By virtue of \cite{MP}, there exists a unique $\Phi$ with the
following estimate:
\begin{equation}\label{LT02}
\norm{\Phi}_{W^{2,q}_{(-1)}(\Omega)}\leq
K\,\norm{f_2}_{W^{0,q}_{(1)}(\Omega)},
\end{equation}
where $K$ is independent of $\Phi$.

Let $\tilde u=u+\pt{y}\Phi$ and $\tilde v=v-\pt{x}\Phi$. Then the
boundary value problem \eqref{eq:BasicEqs}---\eqref{eq:BasicBC2} is
reduced to
\begin{align}
&\hat A\,\pt{x}\tilde U + \hat B\,\pt{y}\tilde U + \hat C\,\tilde U
=\tilde F&&\text{ in }\Omega,\label{LT03}\\
&\tilde U\cdot\hat\alpha_0=\tilde g_0&&\text{ on }\Gamma_0,\label{LT04}\\
&\tilde U\cdot\bar\alpha_1=\tilde g_1&&\text{ on
}\Gamma_1,\label{LT05}
\end{align}
where $\tilde F=(\tilde f,0)^\top$, $\tilde
f=f_1-\displaystyle\frac{\pt{x}\Phi}{y}$, $\tilde g_0=g_0$, and
$\tilde g_1=g_1$. Since
\[\begin{split}
\norm{\frac{\pt{x}\Phi}{y}}_{W_{(1)}^{0,q}(\Omega)} =&
\norm{\me^t\,\frac{\pt{x}\Phi(\me^t,\theta)}{\me^t\sin\theta}}_{W^{0,q}(\Pe(\Omega))}
\leq
K(\omega_0)\norm{\pt{x}\Phi(\me^t,\theta)}_{W^{0,q}(\Pe(\Omega))}\\
=&K(\omega_0)\norm{\pt{x}\Phi}_{W_{(0)}^{0,q}(\Omega)} \leq
K(\omega_0)\norm{f_2}_{W_{(1)}^{0,q}(\Omega)},
\end{split}
\]
we have
\[
\norm{\tilde f}_{W_{(1)}^{0,q}(\Omega)}\leq K(\omega_0)\sum_{j=1,2}
\norm{f_j}_{W_{(1)}^{0,q}(\Omega)}=C(\omega_0)
\norm{F}_{W_{(1)}^{0,q}(\Omega)}.
\]

By the second equation of \eqref{LT03}, there exists  a potential
function $\varphi$ such that
$\nabla\varphi=(\pt{x}\varphi,\pt{y}\varphi)=\tilde{U}$. Then the
boundary value problem \eqref{LT03}---\eqref{LT05} can be
reformulated as a boundary value problem of a second-order elliptic
equation:
\[
\lb{
\begin{aligned}
&\pt{xx}\varphi + \pt{yy}\varphi +
\frac{\pt{y}\varphi}{y} =\tilde f& \text{ in }\Omega,\\
&\pt{y}\varphi - \tg\omega_0\pt{x}\varphi =\tilde g_0&
\text{ on }\Gamma_0,\\
&\pt{x}\varphi - \ctg\omega_1\pt{y}\varphi =\tilde g_1& \text{ on
}\Gamma_1.
\end{aligned} }
\]
Now Proposition \ref{le:001} yields that there exists a unique
solution $\varphi\in W_{(-1)}^{2,q}(\Omega)$ with the following
estimate:
\begin{equation}\label{LT06}
\begin{split}
\norm{\varphi}_{W_{(-1)}^{2,q}(\Omega)} \leq&
\,K\,\Big(\|\tilde{f}\|_{W_{(1)}^{0,q}(\Omega)} +
\sum_{j=0,1}\norm{\tilde{g_j}}_{W_{(0)}^{1-1/q,q}(\Gamma_j)}\Big)\\
\leq&\, K\,\Big(\norm{F}_{W_{(1)}^{0,q}(\Omega)} +
\sum_{j=0,1}\norm{g_j}_{W_{(0)}^{1-1/q,q}(\Gamma_j)}\Big),
\end{split}
\end{equation}
where $K$ depends only on $\omega_0$, but is independent of
$\varphi$, $F$, and $g_j, j=0,1$. Thus, there exists a unique
solution $U\in (W_{(0)}^{1,q}(\Omega))^2$ to problem
\eqref{eq:BasicEqs}---\eqref{eq:BasicBC2} with the following
estimate:
\begin{equation}\label{eq:BasicEstimate}
\norm{U}_{W_{(0)}^{1,q}(\Omega)}
\leq \hat{K}\,\Big(\norm{F}_{W_{(1)}^{0,q}(\Omega)} +
\sum_{j=0,1}\norm{g_j}_{W_{(0)}^{1-1/q,q}(\Gamma_j)}\Big),
\end{equation}
where $\hat{K}$ is independent of $U$, $F$, and $g_j, j=0,1$, but
depends only on $\omega_0$.

With the argument above, we obtain the following corollary of
Proposition \ref{le:001}:

\begin{proposition}
Let $1<q<\infty$. Let $F\in (W_{(1)}^{0,q}(\Omega))^2$ and $g_j\in
W_{(0)}^{1-1/q,q}(\Gamma_j)$, $j=0,1$. Then there exists a unique
solution $U\in (W_{(0)}^{1,q}(\Omega))^2$ to the boundary value
problem \eqref{eq:BasicEqs}---\eqref{eq:BasicBC2}. Moreover, the
solution satisfies estimate \eqref{eq:BasicEstimate}.
\end{proposition}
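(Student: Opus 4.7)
The plan is to reduce the first-order elliptic system \eqref{eq:BasicEqs}--\eqref{eq:BasicBC2} to the scalar singular Neumann problem already solved in Proposition \ref{le:001}. This requires two successive substitutions: the first absorbs the inhomogeneity $f_2$ of the curl-type equation into a harmonic-type correction $\Phi$, and the second writes the resulting curl-free field as the gradient of a scalar potential $\varphi$ which then satisfies \eqref{001}.

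For the first substitution, I would construct $\Phi\in W^{2,q}_{(-1)}(\Omega)$ solving $\triangle\Phi=f_2$ in $\Omega$ with $\Phi=0$ on $\Gamma_0\cup\Gamma_1$, together with the bound $\norm{\Phi}_{W^{2,q}_{(-1)}(\Omega)}\leq K\norm{f_2}_{W^{0,q}_{(1)}(\Omega)}$. Existence and the estimate come from the same Maz'ya--Plamenevski\v{\i} Fredholm machinery invoked in Proposition \ref{le:001}, applied to the pure Laplacian with zero Dirichlet data; the homogeneous strip problem admits only the trivial solution by an elementary energy argument, with no appeal to Lemma \ref{le:ODEUniqueness} needed. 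Setting $\tilde u := u + \partial_y\Phi$ and $\tilde v := v - \partial_x\Phi$, the system \eqref{eq:BasicEqs} becomes
\[
\partial_x\tilde u + \partial_y\tilde v + \tilde v/y = \tilde f, \qquad \partial_x\tilde v - \partial_y\tilde u = 0, \qquad \tilde f := f_1 - \partial_x\Phi/y.
\]
Since $\Phi$ vanishes along each ray $\Gamma_j$, its tangential derivative there also vanishes; a short check then shows that $(-\tg\omega_0, 1)\cdot(\partial_y\Phi, -\partial_x\Phi)^\top = 0$ on $\Gamma_0$ and $(1, -\ctg\omega_1)\cdot(\partial_y\Phi, -\partial_x\Phi)^\top = 0$ on $\Gamma_1$, so the boundary conditions \eqref{eq:BasicBC1}--\eqref{eq:BasicBC2} for $\tilde U$ are preserved with the same right-hand sides $g_0, g_1$.

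Because $\Omega$ is simply connected and $\tilde U$ is curl-free, there exists a potential $\varphi$ with $\nabla\varphi = \tilde U$, and the reduced problem becomes exactly the Neumann problem \eqref{001} for $\varphi$ with data $(\tilde f, g_0, g_1)$. Proposition \ref{le:001} then delivers a unique $\varphi\in W^{2,q}_{(-1)}(\Omega)$ satisfying \eqref{002}, from which uniqueness of $U\in (W^{1,q}_{(0)}(\Omega))^2$ and the estimate \eqref{eq:BasicEstimate} follow by unwinding the substitution $U = \tilde U - (\partial_y\Phi, -\partial_x\Phi)^\top$. The one nontrivial bookkeeping step---and the only place where the geometry of $\Omega$ enters beyond Proposition \ref{le:001}---is the weighted estimate $\norm{\partial_x\Phi/y}_{W^{0,q}_{(1)}(\Omega)} \leq K(\omega_0)\norm{f_2}_{W^{0,q}_{(1)}(\Omega)}$ needed to place $\tilde f$ in the correct data space. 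After pulling back through $\Pe(r,\theta)=(t,\theta)$, the factor $1/y = 1/(r\sin\theta)$ becomes $\me^{-t}/\sin\theta$, which combines with the weight $\me^t$ in $\norm{\cdot}_{W^{0,q}_{(1)}}$ to leave the bounded multiplier $1/\sin\theta \leq 1/\sin\omega_0$ on the strip; this reduces the claim to $\norm{\partial_x\Phi}_{W^{0,q}_{(0)}(\Omega)} \leq \norm{\Phi}_{W^{2,q}_{(-1)}(\Omega)}$, which is immediate from the definition of the weighted norms. I expect no other genuine obstacle in the argument, since the hard analytic input---the isomorphism property for the singular Neumann problem---has already been established.
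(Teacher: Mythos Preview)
Your proposal is correct and follows essentially the same approach as the paper: construct a Dirichlet solution $\Phi$ of $\triangle\Phi=f_2$ via the Maz'ya--Plamenevski\v{\i} theory, substitute $\tilde U = U + (\partial_y\Phi,-\partial_x\Phi)^\top$ to kill the curl inhomogeneity, introduce a potential $\varphi$ with $\nabla\varphi=\tilde U$, and apply Proposition~\ref{le:001}. Your explicit verification that the boundary data $g_0,g_1$ are unchanged (because the Dirichlet condition forces the tangential derivative of $\Phi$ to vanish along each ray) and your handling of the weighted estimate for $\partial_x\Phi/y$ match the paper's argument exactly, with slightly more detail than the paper itself provides.
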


Applying the continuity method, we can extend this result to a small
``perturbed'' boundary value problem for the first-order elliptic
system.

\subsection{A small perturbed boundary value problem for a
singular first-order elliptic system}

Consider the following boundary value problem:
\begin{align}
&A\,\pt{x}U + B\,\pt{y}U + C\,U = F&&\text{ in }\Omega,\label{eq:PerturbEqs}\\
&U\cdot\alpha_0=g_0 &&\text{ on }\Gamma_0,\label{eq:PerturbBC1}\\
&U\cdot\alpha_1=g_1 &&\text{ on }\Gamma_1,\label{eq:PerturbBC2}
\end{align}
where $A$, $B$, and $C$ are $2\times2$ matrix functions defined on
$\Omega$, $\alpha_j=(\alpha_{j1},\alpha_{j2})^\top$ are vector
functions defined on $\Gamma_j$, $j=0,1$. Then we have

\begin{proposition}\label{le:002}
There exists a positive constant $\hat{\epsilon}$, depending only on
the constant $\hat{K}$ on the right side of estimate
\eqref{eq:BasicEstimate}, such that, if the coefficients of problem
\eqref{eq:PerturbEqs}---\eqref{eq:PerturbBC2} satisfy the following
conditions:
\begin{equation}\label{eq:Perturb}
\norm{(A-\hat A,B-\hat{B})}_{C^0(\Omega)}+ \norm{C-\hat
C}_{C^0_{(1)}(\Omega)} +
\sum_{j=0,1}\norm{\alpha_j-\hat\alpha_j}_{C^1_{(0)}(\Gamma_j)}\leq
\hat{\epsilon},
\end{equation}
then, for any $F\in (W_{(1)}^{0,q}(\Omega))^2$ and $g_j\in
W_{(0)}^{1-1/q,q}(\Gamma_j), j=0,1$, there exists a unique solution
$U\in (W_{(0)}^{1,q}(\Omega))^2$ to the boundary value problem
\eqref{eq:PerturbEqs}---\eqref{eq:PerturbBC2}. Moreover, the
solution $U$ satisfies the following estimate:
\begin{equation}\label{eq:PerturbEstimate}
\norm{U}_{W_{(0)}^{1,q}(\Omega)}
\leq K\,\Big(\norm{F}_{W_{(1)}^{0,q}(\Omega)} +
\sum_{j=0,1}\norm{g_j}_{W_{(0)}^{1-1/q,q}(\Gamma_j)}\Big),
\end{equation}
where $K$ is independent of $U$, $F$, and $g_j, j=0,1$, but depends
only on $\omega_0$.
\end{proposition}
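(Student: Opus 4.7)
The plan is to recast \eqref{eq:PerturbEqs}--\eqref{eq:PerturbBC2} as a small perturbation of the unperturbed problem \eqref{eq:BasicEqs}--\eqref{eq:BasicBC2} and invert it by a Banach fixed-point argument; this is the contraction-mapping incarnation of the continuity method hinted at in the text. Denote by $\hat T$ the solution operator produced by the preceding proposition, so that $\hat T$ maps $W^{0,q}_{(1)}(\Omega)\times\prod_{j=0,1}W^{1-1/q,q}_{(0)}(\Gamma_j)$ into $W^{1,q}_{(0)}(\Omega)$ with norm at most $\hat K$. First I would rewrite \eqref{eq:PerturbEqs}--\eqref{eq:PerturbBC2} in the form
\begin{equation*}
\hat A\,\pt{x}U+\hat B\,\pt{y}U+\hat C\,U=\tilde F(U),\qquad U\cdot\hat\alpha_j=\tilde g_j(U)\ \text{on }\Gamma_j,
\end{equation*}
where $\tilde F(U):=F-(A-\hat A)\pt{x}U-(B-\hat B)\pt{y}U-(C-\hat C)U$ and $\tilde g_j(U):=g_j-U\cdot(\alpha_j-\hat\alpha_j)$, and then define the map $\T(U):=\hat T\bigl(\tilde F(U),\tilde g_0(U),\tilde g_1(U)\bigr)$ on $W^{1,q}_{(0)}(\Omega)$. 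A fixed point of $\T$ is exactly a solution to \eqref{eq:PerturbEqs}--\eqref{eq:PerturbBC2}, so the task reduces to showing that $\T$ contracts when $\hat\epsilon$ is small.

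Next I would verify that the perturbation terms are small in the weighted norms controlling $\hat T$. By the definitions \eqref{eq:WeightedSobolevNorms}--\eqref{eq:WeightedHolderNorm}, first-order differentiation shifts the weight by one: if $U\in W^{1,q}_{(0)}(\Omega)$, then $\pt{x}U,\pt{y}U\in W^{0,q}_{(1)}(\Omega)$ with norms bounded by $\norm{U}_{W^{1,q}_{(0)}(\Omega)}$. Pointwise multiplication by an element of $C^0(\Omega)$ preserves $W^{0,q}_{(1)}(\Omega)$, while multiplication by an element of $C^0_{(1)}(\Omega)$ (behaving like $O(1/r)$ near the vertex, matching the singularity of $\hat C$) maps $W^{0,q}_{(0)}(\Omega)$ into $W^{0,q}_{(1)}(\Omega)$. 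These facts combine to give
\begin{equation*}
\norm{\tilde F(U_1)-\tilde F(U_2)}_{W^{0,q}_{(1)}(\Omega)}\le K\bigl(\norm{(A-\hat A,B-\hat B)}_{C^0(\Omega)}+\norm{C-\hat C}_{C^0_{(1)}(\Omega)}\bigr)\norm{U_1-U_2}_{W^{1,q}_{(0)}(\Omega)}.
\end{equation*}
For the boundary terms, the weighted trace inequality following \eqref{eq:WeightedTraceNorm} yields $\norm{U}_{W^{1-1/q,q}_{(0)}(\Gamma_j)}\le K\norm{U}_{W^{1,q}_{(0)}(\Omega)}$, and the algebra property that multiplication by a $C^1_{(0)}(\Gamma_j)$ function is continuous on $W^{1-1/q,q}_{(0)}(\Gamma_j)$ yields
\begin{equation*}
\norm{\tilde g_j(U_1)-\tilde g_j(U_2)}_{W^{1-1/q,q}_{(0)}(\Gamma_j)}\le K\norm{\alpha_j-\hat\alpha_j}_{C^1_{(0)}(\Gamma_j)}\norm{U_1-U_2}_{W^{1,q}_{(0)}(\Omega)}.
\end{equation*}

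Assembling these estimates with the $\hat K$-bound for $\hat T$ delivers $\norm{\T(U_1)-\T(U_2)}_{W^{1,q}_{(0)}(\Omega)}\le c\hat K\hat\epsilon\,\norm{U_1-U_2}_{W^{1,q}_{(0)}(\Omega)}$ for a constant $c$ depending only on $\omega_0$ and $q$. Choosing $\hat\epsilon$ so that $c\hat K\hat\epsilon<1/2$ makes $\T$ a contraction, and the Banach fixed-point theorem produces the unique solution $U$; a geometric-series estimate against $\hat T(F,g_0,g_1)$ then gives \eqref{eq:PerturbEstimate} with $K=\hat K/(1-c\hat K\hat\epsilon)$. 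The step I expect to be the main obstacle is the bookkeeping in the weighted Mellin/Fourier spaces of \eqref{3.2a}, especially the multiplier property for $C^1_{(0)}$ functions on $W^{1-1/q,q}_{(0)}(\Gamma_j)$ and the shift-of-weight identities under differentiation and under multiplication by $O(1/r)$ coefficients; once these are established the contraction estimate is routine, and it is precisely because they are most naturally formulated in the strip $\Pe(\Omega)$ that the paper introduces the exponential weighted norms in the form \eqref{eq:WeightedSobolevNorms}.
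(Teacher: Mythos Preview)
Your proof is correct. The paper takes a slightly different but closely related route: it views \eqref{eq:BasicEqs}--\eqref{eq:BasicBC2} and \eqref{eq:PerturbEqs}--\eqref{eq:PerturbBC2} as bounded linear operators $\hat T$ and $T$ from $X=(W^{1,q}_{(0)}(\Omega))^2$ to $Y=(W^{0,q}_{(1)}(\Omega))^2\times\prod_{j}W^{1-1/q,q}_{(0)}(\Gamma_j)$, interpolates $T_s=(1-s)\hat T+sT$, and uses the invertibility of $\hat T$ together with the smallness bound $\|(\hat T-T)U\|_Y\le K\hat\epsilon\|U\|_X$ to obtain the uniform a~priori estimate $\|U\|_X\le K\|T_sU\|_Y$ for all $s\in[0,1]$, after which the continuity method finishes the job. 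Your argument instead inverts $\hat T$ once and for all and runs a Banach fixed-point/Neumann-series argument on the solution side. The two are standard equivalents: the paper's version packages the perturbation estimate as an a~priori bound uniform in $s$, while yours converts it directly into a contraction constant; both rest on exactly the same weighted multiplier and trace facts you identified, and both yield \eqref{eq:PerturbEstimate} with a constant of the form $\hat K/(1-c\hat K\hat\epsilon)$.
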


\begin{proof}
Denote the boundary value problems
\eqref{eq:BasicEqs}---\eqref{eq:BasicBC2} and
\eqref{eq:PerturbEqs}---\eqref{eq:PerturbBC2} by linear bounded
operators $\hat T$ and $T$ respectively from
$X=(W_{(0)}^{1,q}(\Omega))^2$ to
$Y=(W_{(1)}^{0,q}(\Omega))^2\times\prod\limits_{j=0,1}
W_{(0)}^{1-1/q,q}(\Gamma_j)$. By Proposition \ref{le:002}, $\hat T$
is invertible and $\hat T^{-1}$ is also a linear bounded operator.

Let $T_s=(1-s)\hat T + sT$, $s\in[0,1]$. By virtue of
\eqref{eq:Perturb}, we have
\[
\|(\hat T-T)U\|_{Y}\leq K\,\hat{\epsilon}\norm{U}_{X},
\]
where $K$ is a constant. Since $\hat TU=T_sU+s(\hat T-T)U$, by
Proposition \ref{le:002},
\[
\norm{U}_{X}\leq K\,\pr{\norm{T_sU}_Y + \|(\hat T-T)U\|_{Y}}\leq
K\,\norm{T_sU}_Y + K\,\hat{\epsilon}\norm{U}_{X}.
\]
Choosing $\hat{\epsilon}$ sufficiently small such that
$K\hat{\epsilon}<1$,  we have
\begin{equation}\label{LT07}
\norm{U}_{X}\leq K\,\norm{T_sU}_Y,
\end{equation}
where $K$ is independent of $U$ and $s\in[0,1]$, but depends only on
$\omega_0$.

Then, applying the continuity method, Proposition \ref{le:002}, and
the uniform estimates \eqref{LT07}, we completes the proof.
\end{proof}

\section{Iteration Scheme}

Our iteration scheme for the stability problem consists of two
iteration mappings: One is for an iteration of approximate transonic
shock-fronts; and the other is for an iteration of the corresponding
nonlinear boundary value problems for given approximate
shock-fronts.

Let $q>2$ and $\psi_0(\eta)=\eta\ctg\omega_1$. Define
\begin{equation}\label{301}
\begin{split}
\Sigma_\tau=&\set{\psi:\ \psi(0)=0, \|\dot\psi-\dot\psi_0\|_{\Gamma_1}\leq\tau},\\
O_\tau=&\Big\{\delta U=(\delta u,\delta v)^\top:\ \norm{(\delta
u,\delta v)}_{W_{(0)}^{1,q}(\Omega)}\leq\tau\Big\}.
\end{split}
\end{equation}

Let $M_S$ and $M$ are positive constants to be determined later. In
order to find the perturbed shock solution to the fixed boundary
value problem \eqref{208}, \eqref{203-1}, \eqref{eq:RHC-B}, and
\eqref{204} of the self-similar shock solution $(U_0;U_0^-;\psi_0)$,
our strategy is as follows: Let $\varepsilon_0>0$ be a small
constant to be determined later and
$0<\varepsilon\leq\varepsilon_0$. Given an approximate boundary
$\psi\in\Sigma_{M_S\varepsilon}$, solve the nonlinear boundary value
problem \eqref{208}, \eqref{203-1}, and \eqref{204} to obtain a
perturbed solution $U_\psi$ of $U_0$. Then we use one of the
Rankine-Hugoniot conditions, \eqref{eq:RHC-B}, to update the
approximate boundary and obtain new $\psi_*$:
\begin{equation}\label{eq:UpdateApS}
\lb{\begin{aligned}&\dot\psi_* =
-\frac{[v](1-\tg\omega_0\ctg\omega_1)} {[u]+\varphi'(x)[v]},\\
&\psi_*(0)=0.\end{aligned}}
\end{equation}
This defines an iteration mapping: $\fj_S:\ \psi\mapsto\psi_*$. To
prove Theorem \ref{th:FixB}, it suffices to verify that there exist
positive constants $M_S$ and $\varepsilon_0$ such that $\fj_S$ is a
well-defined, contraction mapping in $\Sigma_{M_S\varepsilon}$ for
any $0<\varepsilon\leq\varepsilon_0$.

Since the initial value problem \eqref{eq:UpdateApS} is easier, we
will focus mainly on the nonlinear boundary value problem
\eqref{208}, \eqref{203-1}, and \eqref{204} for given
$\psi\in\Sigma_{M_S\varepsilon}$, which requires another nonlinear
iteration:
For given $\delta U\in O_{M\varepsilon}$, a linearized boundary
value problem will be solved in
the weighted Sobolev space $W^{1,q}_{(0)}(\Omega)$ to obtain a
unique solution $\delta U_*$ that is defined as an iteration mapping
$\fj:\ \delta U\mapsto\delta U_*$. By showing that there exist
positive constants $M$ and $\varepsilon_0$ such that $\fj$ is a
well-defined contraction mapping in $O_{M\varepsilon}$ for any
$0<\varepsilon\leq\varepsilon_0$, we conclude that the nonlinear
problem \eqref{208}, \eqref{203-1}, and \eqref{204} is uniquely
solvable in the weighted Sobolev space $W^{1,q}_{(0)}(\Omega)$ as a
perturbation to the background self-similar transonic shock
solution.

\medskip
In particular,
the {\it linearized problem} to \eqref{208}, \eqref{203-1}, and
\eqref{204} in the iteration $\fj$ is
\begin{align}
\label{302}&A_0\,\pt{\xi}\delta U_* + B_0\,\pt{\eta}\delta U_* +
C(\eta)\,\delta U_* = F(\delta U;\psi)
&\text{ in }\Omega,\\
\label{303}&\delta v_* - \varphi'(x(\xi,\xi\tg\omega_0))\,\delta u_*
=g_0(\delta U;\psi) &\text{ on }\Gamma_0,\\
\label{304}&\alpha\,\delta u_* + \beta\,\delta v_* = g_1(\delta U;
\psi) &\text{ on }\Gamma_1,
\end{align}
where $A_0=A(U_0)$ and $B_0=B(U_0)$ for
 the background solution  $U_0=U_0(\theta)=(u_0,v_0)(\theta)$ between
$\Gamma_0$ and $\Gamma_1$ described in Section 2, and
\begin{equation}\label{201}
\alpha=\displaystyle\frac{\partial G}{\partial
u}(U_0(\omega_1);1,0),\qquad \beta=\displaystyle\frac{\partial
G}{\partial v} (U_0(\omega_1); 1,0),
\end{equation}
for
\begin{equation}\label{201a}
G\pr{U; U^-}\defs[\rho u][u] + [\rho v][v].
\end{equation}
We denote this linearized problem as a linear operator $\ft:\ \delta
U\mapsto(F; g_0,g_1)$ for $F=(f_1,f_2)$.

Since $(\pt{u},\pt{v})\rho= -\rho^{2-\gamma}(u,v)$,
we have
\[
\begin{split}
\pt{u}G\pr{U;U^-}&= [\rho u] + (\rho + u\pt{u}\rho)[u] +
v\pt{v}\rho[v]\\
&=[\rho u] + (\rho -u^2\rho^{2-\gamma})[u] -
uv\rho^{2-\gamma}[v],\\
\pt{v}G\pr{U;U^-}&= u\pt{v}\rho [u] + [\rho v]
 + (\rho +v\pt{v}\rho)[v]\\
&=-uv\rho^{2-\gamma}[u] + [\rho v] + (\rho - v^2\rho^{2-\gamma})[v].
\end{split}
\]
Then
\begin{equation*}
\begin{split}
\alpha&=\rho\Big(u-\frac{\rho_\infty}{\rho} + (u-1)\pr{1-u^2\rho^{1-\gamma}}
    - uv^2\rho^{1-\gamma}\Big)
=O(1) \qquad\text{as }\nu\sTo0,\\
\beta&=\rho\big(-(u-1)uv\rho^{1-\gamma} + v + v\pr{1-v^2\rho^{1-\gamma}}\big)
=O(1)\nu^{\frac1{\gamma-1}} \qquad\text{as }\nu\sTo0,
\end{split}
\end{equation*}
where $O(1)$ depends only on $\gamma$ and $b$. Then
\begin{equation}\label{402}
|\frac{\beta}{\alpha}|= O(1)\nu^{\frac1{\gamma-1}}
\qquad\qquad\text{ as }\nu\sTo0.
\end{equation}

Therefore, there exist constants $\nu_0$ and $\varepsilon_0$ such
that, for any $0<\nu\leq\nu_0$ and $0<\varepsilon\leq\varepsilon_0$,
\[
\|(A_0-\hat A, B_0-\hat B)\|_{C_0(\Omega)} +
\norm{\varphi'-\tg\omega_0}_{C^1_{(0)}(\Gamma_0)} + \big
|\frac\alpha\beta-\ctg\omega_1\big|\leq \hat{\epsilon},
\]
where $\hat{A}$ and $\hat{B}$ are the matrices in \eqref{4.16} and
$\hat{\epsilon}$ is the constant in Proposition \ref{le:002}.

If $F\in (W_{(1)}^{0,q}(\Omega))^2$ and $g_j\in
W_{(0)}^{1-1/q,q}(\Gamma_j), j=0,1$, by Proposition \ref{le:002},
there exists a unique solution $\delta U_*\in
(W_{(0)}^{1,q}(\Omega))^2$ to the linearized boundary value problem
\eqref{302}---\eqref{304} such that
\begin{equation}\label{eq:LE}
\norm{\delta U_*}_{W_{(0)}^{1,q}(\Omega)}
\leq K\,\Big(\norm{F}_{W_{(1)}^{0,q}(\Omega)} +
   \sum_{j=0,1}\norm{g_j}_{W_{(0)}^{1-1/q,q}(\Gamma_j)}\Big),
\end{equation}
where $K$ is independent of $(\delta U_*, F, g_0,g_1)$, but depends
only on $b$ and $\gamma$.

\medskip
With the linearized problem, we will start the {\it iteration
scheme} with $F=(f_1,f_2)^\top$ and $g_j, j=0,1,$ that take the
following form:
\begin{equation}\label{305}
\begin{split}
F(\delta U;\psi)\defs&\tilde C(\eta;\psi)\,U - \tilde
A(U;\psi)\,D_2U -
\tilde B(U;\psi)\,D_1U\\
&+A_0\,\pt{\xi}\delta U + B_0\,\pt{\eta}\delta U +
C(\eta)\,\delta U\\
&-\big(A(U)\,\pt{\xi}U + B(U)\,\pt{\eta}U +
C(\eta)\,U\big),\\
g_0(\delta U;\psi):=& u_0(\varphi'(x(\xi,\xi\tg\omega_0))-\tg\omega_0),\\
g_1(\delta U;\psi)\defs&\alpha\,\delta u+\beta\,\delta v - G(U;U^-),
\end{split}
\end{equation}
where $U=U_0+\delta U$. For simplicity, write
$\delta\dot\psi=\dot\psi-\ctg\omega_1$,
$\delta\varphi'=\varphi'-\tg\omega_0$, and
\[\begin{split}
f_1&=\big(\frac{1}{\eta}-\frac{1}{y(\eta;\psi)}\big)v +
\frac{\delta\varphi'}{1 +
\tg\omega_0\,\delta\dot\psi}\Big(\big(1-\frac{u^2}{c^2}\big)D_2u
-\frac{uv}{c^2}D_2v\Big)\\
&\quad+\frac{\delta\dot\psi}{1 +
\tg\omega_0\,\delta\dot\psi}\Big(-\frac{uv}{c^2}D_1u +
\big(1-\frac{v^2}{c^2}\big)D_1v\Big)\\
&\quad + \big(1-\frac{u_0^2}{c_0^2}\big)\pt{\xi}\delta u -
\frac{u_0v_0}{c_0^2}\pr{\pt{\eta}\delta u + \pt{\xi}\delta v} +
\big(1-\frac{v_0^2}{c_0^2}\big)\pt{\eta}\delta v + \frac{\delta
v}{\eta}\\
&\quad -\Big(\big(1-\frac{u^2}{c^2}\big)\pt{\xi}u -
\frac{uv}{c^2}\pr{\pt{\eta}u + \pt{\xi}v} +
\big(1-\frac{v^2}{c^2}\big)\pt{\eta}v + \frac{v}{\eta}\Big),\\
f_2&=\frac{\delta\varphi'}{1 +
\tg\omega_0\,\delta\dot\psi}D_2v-\frac{\delta\dot\psi}{1 +
\tg\omega_0\,\delta\dot\psi}D_1u.
\end{split}\]

\section{Proof of Main Theorem I: Fixed Point of the Iteration Map $\fj$}

In this section, we first prove that there exists a unique fixed
point of the iteration mapping $\fj$ introduced in Section 5.
To achieve this, we prove that $\fj$ is a well-defined, contraction
mapping.

\medskip
We will need the following lemma.

\begin{lemma}\label{le:003} Suppose that $h(0)=0$ and
$\norm{h'(\me^t)}_{L^q(\Real)}<\infty$. Then
\[
\|\frac{h(\me^t)}{\me^t}\|_{L^q(\Real)}\leq
K\norm{h'(\me^t)}_{L^q(\Real)},
\]
that is, $\norm{h}_{W^{0,q}_{(-1)}(\Real_+)}\leq
K\norm{h'}_{W^{0,q}_{(0)}(\Real_+)}$. Moreover, for any constant
$s\neq0$, we have
\[\norm{h(s\me^t)}_{L^q(\Real)}=\norm{h(\me^t)}_{L^q(\Real)}.\]
\end{lemma}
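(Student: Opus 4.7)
The plan is to recognize this as a weighted Hardy-type inequality and reduce it to a convolution estimate under the logarithmic change of variables $t = \ln r$ that already defines the weighted norms in the paper. First I would set $H(t) := h(\me^t)$, so that the hypothesis $h(0)=0$ (together with the continuity implicit in having $h'(\me^t)\in L^q$) forces $H(t)\to 0$ as $t\to-\infty$. Writing $v(t):=\me^{-t}H(t)=h(\me^t)/\me^t$ and $w(t):=h'(\me^t)=\me^{-t}H'(t)$, the chain rule $H'(t)=\me^t h'(\me^t)$ gives the linear first-order ODE
\[
v'(t)+v(t)=w(t).
\]

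The main step is to solve this ODE with the integrating factor $\me^t$: $(\me^t v)'=\me^t w$, and integrate from $-\infty$ using $\me^s v(s)=H(s)\to 0$, which yields the representation
\[
v(t)=\int_{-\infty}^{t}\me^{s-t}w(s)\,\dif s=(K\ast w)(t),\qquad K(u):=\me^{u}\mathbf{1}_{\{u<0\}}.
\]
Since $\|K\|_{L^1(\Real)}=1$, Young's convolution inequality immediately gives $\|v\|_{L^q(\Real)}\le \|w\|_{L^q(\Real)}$, which is exactly the claimed Hardy-type estimate (in fact with $K=1$). Rewriting in the weighted-norm notation from \eqref{eq:WeightedNormsOne} gives $\norm{h}_{W^{0,q}_{(-1)}(\Real_+)}\le \norm{h'}_{W^{0,q}_{(0)}(\Real_+)}$.

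For the second identity, for $s>0$ write $s=\me^\sigma$; the substitution $\tau=t+\sigma$ (a translation on $\Real$) gives
\[
\int_{\Real}|h(s\me^t)|^q\,\dif t=\int_{\Real}|h(\me^{t+\sigma})|^q\,\dif t=\int_{\Real}|h(\me^\tau)|^q\,\dif\tau,
\]
by translation invariance of Lebesgue measure on $\Real$; the $s<0$ case reduces to the same identity after replacing $h(\cdot)$ by $h(-\cdot)$ on the appropriate half-line. There is no real obstacle here; the only mild subtlety is justifying $H(s)\to 0$ as $s\to-\infty$, which follows at once from continuity of $h$ at $0$ with $h(0)=0$, and the convergence of the convolution integral for finite $t$ can be checked by H\"older's inequality with the dual exponent $q'$.
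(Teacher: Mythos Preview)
Your proof is correct. The paper takes a closely related but differently packaged route: it writes $h(x)/x=\int_0^1 h'(sx)\,ds$ directly from the fundamental theorem of calculus (using $h(0)=0$), then applies Jensen's inequality, Fubini, and the scaling invariance of the measure $dx/x$ to conclude. You instead pass to the $t$-variable, set up the first-order ODE $v'+v=w$, solve it as a convolution against an $L^1$ kernel of unit mass, and invoke Young's inequality. Under the substitution $s=\me^\sigma$ the paper's integral representation $\int_0^1 h'(sx)\,ds$ becomes exactly your formula $\int_{-\infty}^t \me^{s-t}w(s)\,ds$, so the two arguments differ mainly in that the paper re-derives the relevant special case of Young's inequality by hand while you cite it; both give the sharp constant $1$. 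For the scaling identity both proofs reduce to translation invariance of Lebesgue measure on $\Real$ (equivalently, scaling invariance of $dr/r$ on $\Real_+$), and the paper, like you, effectively only treats $s>0$.
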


These can be seen by the following direct calculations:
\[\begin{split}
&\int_{-\infty}^\infty\big\|\frac{h(\me^t)}{\me^t}\big\|^q\dif t=
\int_0^\infty\big\|\frac{h(x)}{x}\big\|^q\dif(\ln x)
=\int_0^\infty\frac 1x\big|\int_0^1h'(sx)\dif s\big|^q\dif x\\
&\leq\int_0^\infty\frac 1x\int_0^1\abs{h'(sx)}^q\dif s\dif x
=\int_0^1\int_0^\infty\frac 1x\abs{h'(sx)}^q\dif x\dif s
=\int_0^1\int_0^\infty\frac 1x\abs{h'(x)}^q\dif x\dif s\\
&=\int_0^\infty\frac 1x\abs{h'(sx)}^q\dif x=\int_{-\infty}^\infty
\abs{h'(\me^t)}^q\dif t,
\end{split}\]
and, for any constant $s\ne 0$,
\[\begin{split}
&\int_{-\infty}^\infty\abs{h(s\me^t)}^q\dif t=
\int_0^\infty\abs{h(sx)}^q\dif(\ln x)
=\int_0^\infty\frac 1{sx}\abs{h(sx)}^q\dif (sx)\\
&=\int_0^\infty\abs{h(y)}^q\dif(\ln y) =\int_{-\infty}^\infty
\abs{h(\me^t)}^q\dif t.
\end{split}\]

\subsection{Well-definedness of the iteration mapping $\fj$}

We first show that there exist positive constants $M$ and
$\varepsilon_0$ such that, for any $0<\varepsilon\leq\varepsilon_0$,
$\fj$ is well-defined in $O_{M\varepsilon}$ with the help of
estimate \eqref{eq:LE}.

By Lemma \ref{le:003}, we have
\[\begin{split}
&\big\|\big(\frac 1\eta -
\frac{1}{y(\eta;\psi)}\big)v_0\big\|_{W_{(1)}^{0,q}(\Omega)}\\
&=\big\|v_0\,\frac{1}{y\eta}\big(\varphi(x)
   -x\tg\omega_0+\tg\omega_0(\psi(\eta)-\eta\ctg\omega_1)\big)\big\|_{W_{(1)}^{0,q}(\Omega)}\\
&=O(1)\nu^{\frac1{\gamma-1}}\Big(\big\|\frac{x(\varphi(x)-x\tg\omega_0)}{y\eta
    x}\big\|_{W_{(1)}^{0,q}(\Omega)}
  +\big\|\frac{\tg\omega_0(\psi(\eta)-\eta\ctg\omega_1)}{y\eta}
            \big\|_{W_{(1)}^{0,q}(\Omega)}\Big)\\
&=O(1) K(\omega_0)\nu^{\frac1{\gamma-1}}
  \Big(\norm{\delta\varphi'}_{W_{(0)}^{0,q}(\Real_+)}
        +\|\delta\dot\psi\|_{W_{(0)}^{0,q}(\Real_+)}\Big)\\
&=O(1) K(\omega_0)(1+M_S)\nu^{\frac1{\gamma-1}}\varepsilon.
\end{split}\]
Similarly, we have
\begin{eqnarray*}
&&\big\|\Big(\frac 1\eta - \frac{1}{y(\eta)}\Big)\delta
      v\big\|_{W_{(1)}^{0,q}(\Omega)}
      \leq K M_S\varepsilon\norm{\delta v}_{W_{(1)}^{0,q}(\Omega)}
        \leq K M_SM\varepsilon^2,\\
&&\big\|\frac{\delta\dot\psi}{1+\tg\omega_0\, \delta\dot\psi}D_1u_0
   \big\|_{W_{(1)}^{0,q}(\Omega)}
=\big\|\frac{\delta\dot\psi}{1 + \tg\omega_0\,\delta\dot\psi}\frac
        1r(-\sin\theta+\tg\omega_0\cos\theta)
        \pt{\theta}u_0\big\|_{W_{(1)}^{0,q}(\Omega)}\\
&&\qquad \leq
  K\norm{v_0}_{L^\infty}\|\delta\dot\psi\|_{W_{(0)}^{0,q}(\Real_+)}
   \leq K\,M_S\nu^{\frac1{\gamma-1}}\varepsilon,\\
&& \big\|\frac{\delta\dot\psi}{1 +
    \tg\omega_0\,\delta\dot\psi}D_1(\delta u)\big\|_{W_{(1)}^{0,q}(\Omega)}\leq
    K M_SM\varepsilon^2,\\
&&\big\|\big(1-\frac{u_0^2}{c_0^2}\big)\pt{\xi}u
   -\big(1-\frac{u^2}{c^2}\big)\pt{\xi}u\big\|_{W_{(1)}^{0,q}(\Omega)}
=\big\|\big(\frac{u^2}{c^2}-\frac{u_0^2}{c_0^2}\big)
   \pr{\pt{\xi}u_0+\pt{\xi}\delta u}\|_{W_{(1)}^{0,q}(\Omega)}\\
&&\qquad\leq K M\nu^{\frac2{\gamma-1}}\varepsilon.
\end{eqnarray*}

The other terms in the expression of $F$ can be estimated
analogously. Hence, we have
\begin{equation}\label{406}
\norm{F}_{W_{(1)}^{0,q}(\Omega)}
\leq
K\varepsilon\Big(M_S\nu^{\frac1{\gamma-1}}+M\nu^{\frac2{\gamma-1}}+MM_S\varepsilon\Big).
\end{equation}

It is easy to see that
\begin{equation}\label{eq:ItEg1}
\norm{g_0}_{W_{(0)}^{1-1/q,q}(\Gamma_0)}\leq
K\nu^{\frac1{\gamma-1}}\norm{\delta\varphi'}_{W_{(0)}^{1-1/q,q}(\Gamma_0)}\leq
K\nu^{\frac1{\gamma-1}}\varepsilon.
\end{equation}
Furthermore, we have
\begin{equation}\label{eq:ItEg2-01}
\norm{g_1}_{W_{(0)}^{1-1/q,q}(\Gamma_0)}\leq
K\norm{g_1}_{W_{(0)}^{1,q}(\Omega)},
\end{equation}
and
\[
g_1=\big(G(U;U_0^-)-G(U;U^-)\big) + \big(\alpha\,\delta
u+\beta\,\delta v - G(U;U_0^-)\big).
\]

With a direct calculation, we have
\[\begin{aligned}
&(\pt{u^-}, \pt{v^-})\rho^- = -(\rho^-)^{2-\gamma}(u^-,v^-),\\
&(\partial^2_{u^-}, \partial^2_{v^-})\rho^- =
-(\rho^-)^{2-\gamma}\big(1+(2-\gamma)(\rho^-)^{1-\gamma}(u^-)^2,
1+(2-\gamma)(\rho^-)^{1-\gamma}(v^-)^2\big), \\
&\pt{u^-v^-}\rho^- = (2-\gamma)u^-v^-(\rho^-)^{3-2\gamma}.
\end{aligned}\]
Then
\[\begin{split}
\pt{u^-}G&=-[\rho u] - [u]\big(\rho^-
-(u^-)^2(\rho^-)^{2-\gamma}\big) +
[v]u^-v^-(\rho^-)^{2-\gamma},\\
\pt{v^-}G&= [u]u^-v^-(\rho^-)^{2-\gamma} - [\rho v] - [v]\big(\rho^-
- (v^-)^2(\rho^-)^{2-\gamma}\big),
\end{split}\]
and
\begin{eqnarray*}
&&\partial^2_{u^-}G = -\pr{[u]u^-+[v]v^-}\partial^2_{u^-}\rho^-
  + 2\pr{\rho^- -(u^-)^2(\rho^-)^{2-\gamma}} + 2[u]u^-(\rho^-)^{2-\gamma},\\
&&\partial^2_{v^-}G
  =-\pr{[u]u^-+[v]v^-}\partial^2_{v^-}\rho^- + 2\pr{\rho^-
     -(v^-)^2(\rho^-)^{2-\gamma}} + 2[v]v^-(\rho^-)^{2-\gamma},\\
&&\pt{u^-v^-}G = -\pr{[u]u^-+[v]v^-}\pt{u^-v^-}\rho^- -
2u^-v^-(\rho^-)^{2-\gamma} +
\pr{[u]u^-+[v]v^-}(\rho^-)^{2-\gamma},\\
&&\pt{uu^-}G =-\rho-\rho^- + u^2\rho^{2-\gamma}
+(u^-)^2(\rho^-)^{2-\gamma},\\
&&\pt{vu^-}G = uv\rho^{2-\gamma}
+u^-v^-(\rho^-)^{2-\gamma},\\
&&\pt{uv^-}G = uv\rho^{2-\gamma}
+u^-v^-(\rho^-)^{2-\gamma},\\
&&\pt{vv^-}G =-\rho-\rho^- + v^2\rho^{2-\gamma}
+(v^-)^2(\rho^-)^{2-\gamma}.
\end{eqnarray*}
Since $\varepsilon\ll\nu^{\frac1{\gamma-1}}$, we obtain that, for
any $1<\gamma\leq2$,
\[
\begin{split}
\norm{G(U;U_0^-) - G(U;U^-)}_{W^{1,q}_{(0)}(\Omega)}
=&\big\|\int_0^1\nabla_{U^-}G(U;U_s^-)\dif s\cdot\delta
   U^-\big\|_{W^{1,q}_{(0)}(\Omega)}\\
\leq& K\varepsilon\big(1+\nu^{\frac{4-2\gamma}{\gamma-1}} +
\nu^{\frac{3-2\gamma}{\gamma-1}}M\varepsilon\big)\leq K\varepsilon,
\end{split}\]
where $U_s^-=sU^- + (1-s)U_0^-$ and $K$ is independent of $\nu$ and
$\varepsilon$.

Analogous calculations for $\nabla_UG$, $\nabla_U^2G$, and
$\nabla_U^3G$ yield
\[\begin{split}
\norm{\alpha\,\delta u + \beta\,\delta v -
G(U;U_0^-)}_{W^{1,q}_{(0)}(\Omega)}
&=\big\|\frac12\int_0^1\nabla_U^2G(U_s;U_0^-)\delta U\dif
s\,\delta U\big\|_{W^{1,q}_{(0)}(\Omega)}\\
&\leq KM^2\varepsilon^2,
\end{split}\]
where $U_s=sU+(1-s)U_0$ and $K$ is independent of $\nu$ and
$\varepsilon$.

Hence, by \eqref{eq:ItEg2-01}, we have
\begin{equation}\label{eq:ItEg2-02}
\norm{g_1(\delta U;\psi)}_{W_{(0)}^{1-1/q,q}(\Gamma_1)}\leq
K\varepsilon\big(1+M^2\varepsilon\big),
\end{equation}
where $K$ is independent of $\nu$ and $\varepsilon$.

Therefore, we can choose $\nu_0>0$ and $\varepsilon_0>0$
sufficiently small such that, for any $0<\nu\leq\nu_0$ and
$0<\varepsilon\leq\varepsilon_0$,
\begin{equation}\label{407}
\norm{\delta U_*}_{W^{1,q}_{(0)}(\Omega)}
\leq\hat{K}\varepsilon,
\end{equation}
where $\hat K$ is independent of $\delta U_*$,
$\varepsilon$, and $\nu$, but depends on $\gamma$ and $\omega_0(b)$.

Hereafter, we fix $M=\hat K$. Then the mapping $\fj$ is well-defined
in $O_{M\varepsilon}$.

\subsection{Contraction of the iteration mapping $\fj$} We now show that, for
$\fj(\delta U^j)=\delta U^j_*, j=1,2,$
we can choose $\nu_0>0$ and $\varepsilon_0>0$ sufficiently small
such that, for any $0<\nu\leq\nu_0$ and
$0<\varepsilon\leq\varepsilon_0$,
\begin{equation}\label{408}
\norm{\fj(\delta U^2) - \fj(\delta
U^1)}_{W^{1,q}_{(0)}(\Omega)}\leq\frac12\norm{\delta U^2-\delta
U^1}_{W^{1,q}_{(0)}(\Omega)}.
\end{equation}

Noticing that $\ft(\delta U^j_*)=(F;g_1,g_2)(\delta U^j;\psi),
j=1,2$,
we have
\begin{equation}\label{409}
\begin{split}
&\norm{\delta U^2_{*}-\delta U^1_*}_{W^{1,q}_{(0)}(\Omega)}\\
&\leq K\Big(\norm{F(\delta U^2)-F(\delta
U^1)}_{W^{0,q}_{(1)}(\Omega)} + \sum_{j=0,1}\norm{g_j(\delta
U^2)-g_j(\delta U^1)}_{W^{1-1/q,q}_{(0)}(\Gamma_j)}\Big),
\end{split}
\end{equation}
where $K$ is independent of $\nu$, $\sigma$, $\delta U^j$, and
$\delta U^j_*, j=1,2$, but depends only on $\omega_0(b)$ and
$\gamma$.

Since
\begin{eqnarray*}
&&\big\|\big(\frac1\eta-\frac1{y(\eta;\psi)}\big)\pr{\delta
v^2-\delta v^1}\big\|_{W^{0,q}_{(1)}(\Omega)}\leq
KM_S\varepsilon\norm{\delta v^2 - \delta
v^1}_{W^{1,q}_{(0)}(\Omega)},\\
&&\big\|\frac{\delta\dot\psi}{1 +
\tg\omega_0\,\delta\dot\psi}D_1(\delta u^2-\delta
u^1)\big\|_{W_{(1)}^{0,q}(\Omega)}\leq KM_S\varepsilon\norm{\delta
u^2 - \delta u^1}_{W^{1,q}_{(0)}(\Omega)},
\end{eqnarray*}
\[\begin{split}
&\norm{A_0\,\pt{\xi}\big(\delta U^2 - \delta U^1\big) -
\big(A(U^2)\pt{\xi}U^2
- A(U^1)\pt{\xi}U^1\big)}_{W^{0,q}_{(1)}(\Omega)}\\
&\leq \norm{\big(A_0-A(U^2)\big)\pt{\xi}(\delta U^2 - \delta
U^1)}_{W^{0,q}_{(1)}(\Omega)} + \norm{\big(A(U^2) -
A(U^1)\big)\pt{\xi}U^1
}_{W^{0,q}_{(1)}(\Omega)}\\
&\leq K\big(M\varepsilon+O(1)\nu^{\frac1{\gamma-1}}\big)\norm{\delta
U^2 - \delta U^1}_{W^{1,q}_{(0)}(\Omega)},
\end{split}\]
and analogous estimates for the other terms of $F(\delta
U^2;\psi)-F(\delta U^1;\psi)$, we have
\begin{equation}\label{410}
\norm{F(\delta U^2)-F(\delta U^1)}_{W^{1,q}_{(0)}(\Omega)}\leq
K\varepsilon\big(O(1)\nu^{\frac1{\gamma-1}}+M\varepsilon+M_S\varepsilon\big)
 \norm{\delta U^2 - \delta U^1}_{W^{1,q}_{(0)}(\Omega)}.
\end{equation}

Obviously,
\begin{equation}\label{eq:ItEg1-02}
g_0(\delta U^2;\psi)-g_0(\delta U^1;\psi) = 0.
\end{equation}

Moreover,
\[\begin{split}
&g_1(U^2;\psi) - g_1(U^1;\psi)\\
&=\alpha(\delta u^2-\delta u^1) + \beta(\delta v^2-\delta v^1) -
\big(G(U^2;U^-)-G(U^1;U^-)\big)\\
&=\big(\alpha(\delta u^2-\delta u^1) + \beta(\delta v^2-\delta v^1)
- \pr{G(U^2;U_0^-)-G(U^1;U_0^-)}\big)\\
&\quad+\big(\pr{G(U^2;U_0^-)-G(U^1;U_0^-)} -
    \pr{G(U^2;U^-)-G(U^1;U^-)}\big).
\end{split}\]
Then, an analogous calculation for $g_1$ above in verifying that
$\fj$ is a well-defined mapping in $O_{M\varepsilon}$ yields that
\begin{equation}\label{411}
\norm{g_2(U^2;\psi) - g_2(U^1;\psi)}_{W^{1-1/q,q}_{(0)}(\Real_+)}
\leq KM\varepsilon\norm{\delta U^2 - \delta
U^1}_{W^{1,q}_{(0)}(\Omega)}.
\end{equation}

Choose $\nu_0>0$ and $\varepsilon_0>0$ sufficiently small. Then, for
any $0<\varepsilon\leq\varepsilon_0$ and $0<\nu\leq\nu_0$, estimates
\eqref{409}--\eqref{411}
imply that \eqref{408} holds, that is, $\fj$ is a contraction
mapping in $O_{M\varepsilon}$.

\section{Proof of Main Theorem II: Fixed Point of the Iteration Map $\fj_S$}

In this section, we prove that there exists a unique fixed point of
the iteration mapping $\fj_S$ introduced in Section 5 by showing
that $\fj_S$ is a well-defined, contraction mapping, which completes
the proof of the main theorem.

\subsection{Well-definedness of the iteration mapping $\fj_S$}

Let $\fj_S(\psi)=\psi_*$. Write
\[
\Psi(U;U^-;\psi)=-\frac{[v](1-\tg\omega_0\ctg\omega_1)}
{[u]+\varphi'(x)[v]}.
\]
Then
\[\begin{aligned}
&\pt{u}\Psi=\frac{[v](1-\tg\omega_0\ctg\omega_1)}{([u]+\varphi'(x)[v])^2},&&
\pt{u^-}\Psi=-\frac{[v](1-\tg\omega_0\ctg\omega_1)}{([u]+\varphi'(x)[v])^2},\\
&\pt{v}\Psi=-\frac{[u](1-\tg\omega_0\ctg\omega_1)}{([u]+\varphi'(x)[v])^2},&&
\pt{v^-}\Psi=\frac{[u](1-\tg\omega_0\ctg\omega_1)}{([u]+\varphi'(x)[v])^2}.
\end{aligned}\]

Thus, by \eqref{eq:UpdateApS}, we obtain
\begin{equation}\label{412}
\norm{\dot\psi_*-\ctg\omega_1}_{S}\leq\tilde{K}M\varepsilon,
\end{equation}
where $\tilde K$ is a constant independent of $\nu$ and
$\varepsilon$.

We choose $M_S=\tilde KM$ hereafter. Then $\fj_S$ is well-defined in
$\Sigma_{M_S\varepsilon}$ in the case that the positive constants
$\nu$ and $\varepsilon$ are sufficiently small. To complete the
proof,
it suffices to verify that $\fj_S$ is a contraction mapping in
$\Sigma_{M_S\varepsilon}$.

\subsection{Contraction of the iteration mapping $\fj_S$}

Let $\fj_S(\psi^j)=\psi^j_*, j=1,2$.
Then we have
\[
\lb{\begin{aligned}&\ft(\delta U_j)=(F;g_0,g_1)(\delta U_j;\psi^j), \qquad j=1,2,\\
&\dot\psi^j_*=\Psi(U_j;U^-;\psi^j).
\end{aligned}}
\]
Thus, we obtain
\begin{equation}\label{413}
\begin{split}
&\norm{\delta U_2-\delta U_1}_{W^{1,q}_{(0)}(\Omega)}\\
&\leq K\Big(\norm{F(\delta U_2;\psi^2)-F(\delta
U_1;\psi^1)}_{W^{0,q}_{(1)}(\Omega)}\\
&\qquad\,\,\, + \sum_{j=0,1}\norm{g_j(\delta U_2;\psi^2)-g_j(\delta
U_1;\psi^1)}_{W^{1-1/q,q}_{(0)}(\Gamma_j)}\Big),
\end{split}
\end{equation}
where $K$ is independent of $\delta U_j$ and $\psi^j, j=1,2$, but
depends only on $\omega_0(b)$ and $\gamma$.

Since
\[\begin{split}
J_1\defs&\Big(\frac1\eta-\frac1{y(\eta;\psi^2)}\Big)v_2 -
\Big(\frac1\eta-\frac1{y(\eta;\psi^1)}\Big)v_1\\
=&\Big(\frac1{y(\eta;\psi^1)}-\frac1{y(\eta;\psi^2)}\Big)v_2 +
\Big(\frac1\eta-\frac1{y(\eta;\psi^1)}\Big)(\delta v_2 - \delta
v_1),
\end{split}\]
and
\[\frac1{y(\eta;\psi^1)}-\frac1{y(\eta;\psi^2)} =
\frac{\tg\omega_0(\psi^2-\psi^1)}{y(\eta;\psi^1)y(\eta;\psi^2)},\]
we have
\[
\norm{J_1}_{W^{0,q}_{(1)}(\Omega)}\leq
K\big(\nu^{\frac1{\gamma-1}}+M\varepsilon\big)
\|\dot\psi^2-\dot\psi^1\|_{W^{0,q}_{(0)}(\Gamma_1)}  +
KM_S\varepsilon\norm{\delta v_2-\delta v_1}_{W^{1,q}_{(0)}(\Omega)}.
\]
Set
\[\begin{split}
J_2\defs&-\frac{\delta\dot\psi^2}{1 + \tg\omega_0\,
\delta\dot\psi^2}D_1u_2
+ \frac{\delta\dot\psi^1}{1 + \tg\omega_0\,\delta\dot\psi^1}D_1u_1\\
=&\Big(\frac{\delta\dot\psi^1}{1 + \tg\omega_0\,\delta\dot\psi^1} -
\frac{\delta\dot\psi^2}{1 +
\tg\omega_0\,\delta\dot\psi^2}\Big)D_1u_2 +
\frac{\delta\dot\psi^1}{1 + \tg\omega_0\,\delta\dot\psi^1}
D_1\pr{\delta u_1-\delta u_2}.
\end{split}\]
An analogous calculation yields
\[\begin{split}
\norm{J_2}_{W^{0,q}_{(1)}(\Omega)}
\leq&K\big(\nu^{\frac1{\gamma-1}}+M\varepsilon\big)
\|\dot\psi^2-\dot\psi^1\|_{\Gamma_1} +KM_S\varepsilon\norm{\delta
u_2-\delta u_1}_{W^{1,q}_{(0)}(\Omega)}.
\end{split}\]
Set
\[\begin{split}
J_3\defs&\big(1-\frac{u_0^2}{c_0^2}\big)\pt{\xi}(\delta u_2-\delta
u_1) -\Big(\big(1-\frac{u_2^2}{c_2^2}\big)\pt{\xi}u_2 -
\big(1-\frac{u_1^2}{c_1^2}\big)\pt{\xi}u_1\Big)\\
=&\big(\frac{u_1^2}{c_1^2}-\frac{u_0^2}{c_0^2}\big)\pt{\xi}(\delta
u_2 - \delta u_1) +
\big(\frac{u_2^2}{c_2^2}-\frac{u_1^2}{c_1^2}\big)\pt{\xi}u_2.
\end{split}\]
Then, as the calculation for $\fj$, we have
\[
\norm{J_3}_{W^{0,q}_{(1)}(\Omega)}\leq
K\big(\nu^{\frac1{\gamma-1}}+M\varepsilon\big) \norm{\delta
u_2-\delta u_1}_{W^{1,q}_{(0)}(\Omega)}.
\]

Analogous calculation for the other terms of $F(\delta
U_2;\psi^2)-F(\delta U_1;\psi^1)$ finally leads to
\begin{equation}\label{414}
\begin{split}
&\norm{F(\delta U_2;\psi^2)-F(\delta U_1;\psi^1)}_{W^{0,q}_{(1)}(\Omega)}\\
&\leq K \big(\nu^{\frac1{\gamma-1}}+M\varepsilon\big)
\|\dot\psi^2-\dot\psi^1\|_{\Gamma_1}
+K\big(M_S\varepsilon+M\varepsilon+\nu^{\frac1{\gamma-1}}\big)\norm{\delta
U_2-\delta U_1}_{W^{1,q}_{(0)}(\Omega)}.
\end{split}
\end{equation}

Since
\[
\begin{split}
g_0(\delta U_2;\psi^2) - g_0(\delta U_1;\psi^1)
=
&u_0\big(\varphi'(x(\xi,\xi\tg\omega_0;\psi^2))
 -\varphi'(x(\xi,\xi\tg\omega_0;\psi^1))\big)\\
=&u_0\varphi''\,\big(x(\xi;\psi^2) -
x(\xi;\psi^1)\big)\\
=&u_0\varphi''\,\big(\psi^2(\xi\tg\omega_0) -
\psi^1(\xi\tg\omega_0)\big),
\end{split}
\]
and
\[
\frac{\dif}{\dif\xi}\varphi'(x(\xi,\xi\tg\omega_0;\psi^j)) =
\varphi''(x)\,\big(\pt{\xi}x + \tg\omega_0\pt{\eta}x\big) =
\varphi''(x)\,\big(1+\tg\omega_0\delta\dot\psi^j\big),
\]
we have
\begin{equation}\label{eq:ItEg1-03}
\norm{g_0(\delta U_2;\psi^2) - g_0(\delta
U_1;\psi^1)}_{W^{1-1/q,q}_{(0)}(\Gamma_0)}\leq
K\nu^{\frac1{\gamma-1}}\varepsilon\|\dot\psi^2 -
\dot\psi^1\|_{\Gamma_1},
\end{equation}
where $K$ depends only on $\omega_0(b)$ and $\gamma$, but
independent of $\psi^j$, $\nu_0$, and $\varepsilon_0$.

Furthermore, we have
\[\begin{split}
&g_1(\delta U_2;\psi^2)-g_1(\delta U_1;\psi^1) \\
=& \alpha(\delta u_2 - \delta u_1) + \beta(\delta v_2 - \delta v_1)
- \big(G(U_2;U^-(\Gamma_1;\psi^2)) - G(U_1;U^-(\Gamma_1;\psi^1))\big)\\
=& \alpha(\delta u_2 - \delta u_1) + \beta(\delta v_2 - \delta v_1)
- \big(G(U_2;U^-(\Gamma_1;\psi^2)) - G(U_1;U^-(\Gamma_1;\psi^2))\big)\\
&\, + \big(G(U_1;U^-(\Gamma_1;\psi^2)) -
G(U_1;U^-(\Gamma_1;\psi^1))\big),
\end{split}\]
where $U^-(\Gamma_1;\psi^j)=U^-(\eta\ctg\omega_1,\eta;\psi^j),
j=1,2$. Notice that
\[
\begin{split}
&G(U_1;U^-(\Gamma_1;\psi^2)) - G(U_1;U^-(\Gamma_1;\psi^1)) \\
&=\int_0^1\big(\pt{u^-}G(U_1;U^-_s)\pr{\delta u^-(\Gamma_1;\psi^2) -
\delta u^-(\Gamma_1;\psi^1)} \\
&\qquad\quad\,\, + \pt{v^-}G(U_1;U^-_s)\pr{\delta
v^-(\Gamma_1;\psi^2) -
  \delta v^-(\Gamma_1;\psi^1)}\big)\dif s,
\end{split}\]
where $U^-_s = sU^-(\Gamma_1;\psi^2) + (1-s)U^-(\Gamma_1;\psi^1)$,
and
\[\begin{split}
\delta U^-(\Gamma_1;\psi^2) - \delta U^-(\Gamma_1;\psi^1)
=&\, \delta U^-(\psi^2(\eta),\eta) - \delta U^-(\psi^1(\eta),\eta)\\
=&\int_0^1\pt{\xi}U^-(s\psi^2+(1-s)\psi^1,\eta)\dif
s\,\pr{\psi^2-\psi^1}.
\end{split}
\]

Then an analogous calculation as for $\fj$ yields
\begin{equation}\label{415}
\begin{split}
&\norm{g_1(\delta U_2;\psi^2)-g_1(\delta
U_1;\psi^1)}_{W^{1-1/q,q}_{(0)}(\Gamma_1)}\\
&\leq K_1M\varepsilon\norm{\delta U_2-\delta
U_1}_{W^{1,q}_{(0)}(\Omega)} + K_2\varepsilon\|\dot\psi^2 -
\dot\psi^1\|_{\Gamma_1},
\end{split}
\end{equation}
where $K_1$ and $K_2$ depend on $\omega_0(b)$ and $\gamma$.

Then, by \eqref{413}--\eqref{415},
we have
\begin{equation}\label{416}
\begin{split}
&\norm{\delta U_2-\delta U_1}_{W^{1,q}_{(0)}(\Omega)}\\
&\leq K\big(\nu^{\frac1{\gamma-1}}+M\varepsilon\big)
\|\dot\psi^2-\dot\psi^1|_{\Gamma_1}
+K\big(M_S\varepsilon+M\varepsilon+\nu^{\frac1{\gamma-1}}\big)\norm{\delta
U_2-\delta U_1}_{W^{1,q}_{(0)}(\Omega)}.
\end{split}
\end{equation}

Choose $\nu_0$ and $\varepsilon_0$ sufficiently small. Then, for any
$0<\nu\leq\nu_0$ and $0<\varepsilon\leq\varepsilon_0$, we have
\begin{equation}\label{417}
\begin{split}
&\norm{\delta U_2-\delta U_1}_{W^{1,q}_{(0)}(\Omega)}\leq
K\big(\nu^{\frac1{\gamma-1}}+M\varepsilon\big)
\|\dot\psi^2-\dot\psi^1\|_{\Gamma_1}.
\end{split}
\end{equation}
Thus,
\[\begin{split}
\|\dot\psi^2_*-\dot\psi^1_*\|_{\Gamma_1}&=
\|\dot\psi^2_*-\dot\psi^1_*\|_{W^{0,q}_{(0)}(\Real_+)} +
\|\dot\psi^2_*-\dot\psi^1_*\|_{C^{0}(\Real_+)}\\
&\leq K\norm{\delta U_2-\delta U_1}_{W^{1,q}_{(0)}(\Omega)} +
K\norm{\delta U^-(\Gamma_1;\psi^2) - \delta U^-(\Gamma_1;\psi^1)}_{\Gamma_1}\\
&\leq K\big(\nu^{\frac1{\gamma-1}}+M\varepsilon+\varepsilon\big)
\|\dot\psi^2-\dot\psi^1\|_{\Gamma_1}\\
&\leq \frac12\|\dot\psi^2-\dot\psi^1\|_{\Gamma_1},
\end{split}\]
where, for the last inequality, we have again chosen $\nu_0$ and
$\varepsilon_0$ to be sufficiently small.

This implies that $\fj_S$ is a contraction mapping so that it has a
unique fix point in $\Sigma_{M_S\varepsilon}$, which completes the
proof of Theorem \ref{th:FixB}.

\appendix

\section*{Appendix: A Fredholm-type Theorem}
\stepcounter{section}

To be self-contained, in this appendix, we give a proof for a
Fredholm-type theorem, Theorem A.1, a special case of Theorem 4.1 in
Maz'ya-Plamenevski\v{i} \cite{MP}, following their ideas. Consider
the boundary value problem of an elliptic equation of second-order
in an infinite strip $\G:=\set{(t,x):\ x\in I:=(x_0,x_1),t\in\Real}$
with boundaries $\Sigma_0=\set{x=x_0}$ and $\Sigma_1=\set{x=x_1}$:
\begin{align}
&L\varphi:=\pt{tt}\varphi + \pt{xx}\varphi + \pt{t}\varphi +
a(x)\pt{x}\varphi = f &&\text{in }\G,\label{Eq}\\
&B_0\varphi:=\pt{x}\varphi=g_0&&\text{on }\Sigma_0,\label{BdC0}\\
&B_0\varphi:=\pt{x}\varphi=g_1&&\text{on }\Sigma_1,\label{BdC1}
\end{align}
where $a(x)\in C^1(\bar I)$, $f\in W_{(-1)}^{0,q}(\G)$, and $g_j\in
W_{(-1)}^{1-1/q,q}(\Real), j=0,1$. We assume $q>2$ since only this
case is really used in this paper. Obviously, the operator
$(L;B_0,B_1)$ of the boundary value problem \eqref{Eq}--\eqref{BdC1}
acts continuously from the space $W_{(-1)}^{2,q}(\G)$ to
$W_{(-1)}^{0,q}(\G)\times\big(W_{(-1)}^{1-1/q,q}(\Real)\big)^2$.

Consider the boundary value problem with a complex parameter
$\lambda$ on the interval $I$:
\begin{align}
&\pt{xx}\varphi  +
a(x)\pt{x}\varphi + (-\lambda^2+\mi\lambda)\varphi = f &&\text{in }I,\label{EqP}\\
&\pt{x}\varphi=g_0&&x=x_0,\label{BdC0P}\\
&\pt{x}\varphi=g_1&&x=x_1.\label{BdC1P}
\end{align}
For all $\lambda$, with the exception of certain isolated points,
\eqref{EqP}--\eqref{BdC1P} has a unique solution $\varphi\in
W^{2,p}$. The exception isolated points of $\lambda$ are called
spectrum of problem \eqref{EqP}--\eqref{BdC1P}.

Then we have
\begin{theorem}\label{th:MP}
If the line ${\rm Im}\, \lambda=\beta$ does not contain the
eigenvalues of problem \eqref{EqP}--\eqref{BdC1P}, then the operator
$(L;B_0,B_1)$ of problem \eqref{Eq}--\eqref{BdC1} realizes an
isomorphism:
\[
W_{(\beta)}^{2,p}(\G)\approx
W_{(\beta)}^{0,q}(\G)\times\big(W_{(\beta)}^{1-1/q,q}(\Real)\big)^2.
\]
Moreover, the solution $\varphi\in W_{(\beta)}^{2,p}(\G)$ of
\eqref{Eq}--\eqref{BdC1} satisfies the estimate:
\begin{equation}\label{MPEsti}
\norm{\varphi}_{W_{(\beta)}^{2,p}(\G)}\leq
K\Big(\norm{f}_{W_{(\beta)}^{0,q}(\G)} +
\sum_{j=0,1}\norm{g_j}_{W_{(\beta)}^{1-1/q,q}(\Real)}\Big).
\end{equation}
\end{theorem}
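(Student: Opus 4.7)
The strategy is the classical partial Fourier approach used in strip problems. First, I reduce to the unweighted case $\beta=0$. Setting $\psi = \me^{\beta t}\varphi$, a direct computation gives
\[
\me^{\beta t}L\varphi = \pt{tt}\psi + \pt{xx}\psi + (1-2\beta)\pt{t}\psi + a(x)\pt{x}\psi + (\beta^2-\beta)\psi =: L_\beta\psi,
\]
while the boundary operators $B_0,B_1$ are unchanged. Under this conjugation, $W^{m,q}_{(\beta)}(\G)\to W^{m,q}(\G)$ isometrically and $W^{1-1/q,q}_{(\beta)}(\Real)\to W^{1-1/q,q}(\Real)$, and the spectral condition on the line $\mathrm{Im}\,\lambda=\beta$ for the original operator becomes the corresponding spectral condition on $\mathrm{Im}\,\lambda=0$ for the Fourier symbol of $L_\beta$. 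Thus it suffices to prove the theorem for $\beta=0$.

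Next I apply the partial Fourier transform $\ff_{t\to\lambda}$, which converts $\pt{t}$ into multiplication by $\mi\lambda$. For each real $\lambda$, the transformed boundary value problem is precisely the parameter problem \eqref{EqP}--\eqref{BdC1P} on $I$ (with right-hand sides $\hat f(\lambda,\cdot)$ and $\hat g_j(\lambda)$). By hypothesis, no eigenvalue lies on $\mathrm{Im}\,\lambda=0$, so for every real $\lambda$ there is a unique solution $\hat\varphi(\lambda,\cdot)\in W^{2,q}(I)$, and one obtains a resolvent operator $R(\lambda)$. The core analytic step is to establish the \emph{uniform-in-$\lambda$ resolvent bound}
\[
\sum_{|\alpha|+2k\le 2}|\lambda|^{2-|\alpha|-2k}\norm{\pt{x}^\alpha R(\lambda)(f,g_0,g_1)}_{L^q(I)}
\le K\big(\norm{f}_{L^q(I)}+\sum_j(|\lambda|^{1-1/q}+1)\norm{g_j}_{\mathbb{C}}\big),
\]
uniformly for $\lambda\in\Real$. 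For large $|\lambda|$ this follows by treating $(-\lambda^2+\mi\lambda)$ as the dominant zeroth-order term and performing the standard elliptic resolvent analysis on $I$; for bounded $\lambda$ it follows from continuity of $R(\lambda)$ in $\lambda$, the compactness of the bounded interval, and the spectrum hypothesis.

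With these $\lambda$-uniform bounds in hand, I recover $\varphi$ by the inverse Fourier transform and deduce the $W^{2,q}(\G)$ estimate \eqref{MPEsti} by a Fourier multiplier argument. Concretely, the symbols $\lambda^2 R(\lambda)$, $\lambda\,\pt{x}R(\lambda)$, and $\pt{xx}R(\lambda)$ are bounded operator-valued functions on $\Real$ whose $\lambda$-derivatives satisfy the Mikhlin--H\"ormander condition $\abs{\lambda}^k\norm{\partial_\lambda^k(\cdot)}\le K$ for $k=0,1$; applying the vector-valued Mikhlin multiplier theorem converts the $L^q_x$-valued $L^q_t$ frequency-side bound into the required $W^{2,q}(\G)$ bound for $\varphi$. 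Injectivity of $(L;B_0,B_1)$ follows because any element of its kernel has a Fourier transform supported in the spectrum, which is disjoint from $\mathrm{Im}\,\lambda=0$; surjectivity is the range statement produced by the explicit inverse just constructed.

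The main obstacle is the Mikhlin-multiplier step: verifying that the operator-valued symbol $R(\lambda)$ satisfies the differentiability and growth bounds required for vector-valued $L^q$ boundedness, uniformly up to the boundary behavior encoded in the trace norms $W^{1-1/q,q}_{(\beta)}(\Real)$. This requires differentiating the resolvent with respect to $\lambda$ (introducing lower-order terms handled by the same a priori analysis) and handling the inhomogeneous boundary data by subtracting off suitable extensions into $\G$, so that the resulting problem has zero boundary data and the multiplier argument applies cleanly. Once these uniform symbol estimates are in place, the isomorphism and the estimate \eqref{MPEsti} follow directly, proving Theorem \ref{th:MP}.
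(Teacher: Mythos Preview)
Your approach is a valid alternative route, but it is genuinely different from the paper's, and it carries one analytic subtlety you underplay.

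The paper does \emph{not} run a Fourier multiplier argument. Instead it bootstraps from the known $L^2$ theory (Kondrat'ev) by a real-variable technique: fix a partition of unity $\{\zeta_k\}$ on $\Real$ adapted to unit intervals, and show (Lemma~\ref{le:MP3}) that if the data are supported in $\{m-1<t<m+1\}$ then the $W^{2,2}_{(\beta)}$ solution satisfies the exponentially localized bound $\norm{\me^{\beta t}\zeta_l\varphi}_{L^p}\le C\me^{-|m-l|\varepsilon}M_{p,\beta}$, obtained by combining the $W^{2,2}_{(\beta_1)}$ estimate for nearby $\beta_1$ with the local ADN interior/boundary estimate. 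An abstract discrete-convolution lemma (Lemma~\ref{le:MP1}) then sums these localized pieces in $\ell^p$ to produce the global $W^{2,p}_{(\beta)}$ bound. Uniqueness is handled by a cutoff $\psi_s\varphi$ argument, again reducing to the $L^2$ estimate and the local ADN bound. The whole argument stays in physical space and never needs an operator-valued multiplier theorem.

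Your Fourier route can certainly be made to work, but the step ``apply the vector-valued Mikhlin multiplier theorem'' is where the real content lies, and the hypothesis you state---norm bounds $|\lambda|^k\norm{\partial_\lambda^k R(\lambda)}\le K$ for $k=0,1$---is \emph{not} sufficient for operator-valued symbols acting on $L^q(I)$; one needs $R$-boundedness of these families (Weis' theorem), or else one must reduce to scalar Mikhlin by writing $R(\lambda)$ through its Green kernel $G_\lambda(x,y)$ and estimating pointwise. Either refinement is feasible here because $I$ is a bounded interval and the Green kernel has explicit exponential structure, but you should flag which version you are invoking and why its hypotheses hold. The paper's exponential-decay/partition-of-unity argument is in effect a hands-on substitute for this multiplier step, trading the abstract $R$-boundedness machinery for the concrete $L^2$ result plus a Schur-type summation.
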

\begin{rem}
In the case $p=2$, this assertion is well-known (cf. \cite{Kon}). In
this case, a solution in the class $W_{(\beta)}^{2,2}(\G)$ can be
represented in the form
\begin{equation}\label{MPSolu}
\varphi(t)=\frac{1}{\sqrt{2\pi}}\int_{\text{Im}\lambda=\beta}\me^{-\mi\lambda
t}R(\lambda)\ff_{t\sTo\lambda}\set{f;g_0,g_1}\dif\lambda,
\end{equation}
where $R(\lambda)$ denotes the inverse operator of problem
\eqref{EqP}--\eqref{BdC1P} and $\ff_{t\sTo\lambda}$ is the Fourier
transform with respect to the $t$-variable into the
$\lambda$-variable. If it is additionally assumed that $f\in
W_{(\beta_1)}^{0,2}(\G),g_j\in W_{(\beta_1)}^{1-1/2,2}(\Real)$ and
that, in the closed strip between the lines Im$\,\lambda=\beta$ and
Im$\,\lambda=\beta_1$, there are no points of the spectrum of
\eqref{EqP}--\eqref{BdC1P},  then the function $\varphi$ defined by
\eqref{MPSolu} belongs to $W_{(\beta_1)}^{2,2}(\G)$, and
\begin{equation}\label{MPEsti1}
\norm{\varphi}_{W_{(\beta_1)}^{2,2}(\G)}\leq
K\Big(\norm{f}_{W_{(\beta_1)}^{0,2}(\G)} +
\sum_{j=0,1}\norm{g_j}_{W_{(\beta_1)}^{1/2,2}(\Real)}\Big).
\end{equation}
\end{rem}

To prove Theorem \ref{th:MP}, we need two lemmas, which are all in
\cite{MP}.

Let $\fa_0$, $\fa_1$, and $\fa_2$ be Banach spaces of functions on
$\Real$, in each of which multiplication by scalar functions in
$C_{c}^\infty(\Real)$ is defined. Let
$\set{\zeta_k}_{-\infty}^\infty$ be a partition of unity on $\Real$
subordinate to the covering of $\Real$ by the intervals
$(k-1)\delta<t<(k+1)\delta$, where $\delta$ is a fixed positive
number and $\zeta_k\in C^\infty(\Real)$. Suppose that the norms
$\norm{\cdot}_j$ in the spaces $\fa_j, j=0,1,2$, possess the
following properties: For $p\in[1,\infty]$,
\begin{align}
&C_1\norm{u}_0\leq\Big(\sum_{k=-\infty}^\infty\norm{\zeta_ku}_0^p\Big)^{1/p}\leq
C_2\norm{u}_0,\label{MPNormP1}\\
&\norm{v}_1\geq C\Big(\sum_{k=-\infty}^\infty\norm{\zeta_kv}_1^p\Big)^{1/p},\label{MPNormP2}\\
&\norm{w}_2\leq
C\Big(\sum_{k=-\infty}^\infty\norm{\zeta_kw}_2^p\Big)^{1/p}.\label{MPNormP3}
\end{align}

\begin{lemma}\label{le:MP1}
Let $\fp:\ \fa_1\sTo\fa_0$ be a linear operator defined on the
functions with compact support and such that, for some
$\varepsilon>0$ and any integers $m$ and $k$,
\begin{equation}\label{MPLOP}
\norm{\zeta_k\fp(\zeta_m v)}_0\leq
C\me^{-\abs{m-k}\varepsilon}\norm{\zeta_m v}_1 \qquad\mbox{for any}
\,\, v \in \fa_1.
\end{equation}
Then

{\rm (i)} For all $v\in\fa_1$ with compact support,
\begin{equation}\label{MPLOP1}
\norm{\fp v}_0\leq C\norm{v}_1,
\end{equation}
where the constant $C$ does not depend on $v$.

{\rm (ii)} Let $\fa_2\subset\fa_0$.
Suppose
further that, for all functions $v$ in $\fa_1$ with compact support
on $\Real$,
\begin{equation}\label{MPLOP2}
\norm{\zeta_k\fp v}_2\leq C\big(\norm{\sigma_kv}_1 +
\norm{\sigma_k\fp v}_0\big),
\end{equation}
where $\sigma_k=\zeta_{k-1}+\zeta_k+\zeta_{k+1}$, $k=0,\pm1, \dots$.
Then
\begin{equation}\label{MPLOP3}
\norm{\fp v}_2\leq C\norm{v}_1.
\end{equation}
\end{lemma}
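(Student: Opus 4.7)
The plan is to exploit the partition of unity $\{\zeta_k\}$ to reduce both parts to a discrete convolution estimate with an exponentially decaying kernel, then absorb the resulting $\ell^p$-sums via the equivalences \eqref{MPNormP1}--\eqref{MPNormP3}.

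For part (i), I would start from the decomposition $\fp v = \sum_m \fp(\zeta_m v)$, which is a finite sum since $v$ is compactly supported. Applying $\zeta_k$ and the triangle inequality,
\[
\norm{\zeta_k\fp v}_0 \leq \sum_m \norm{\zeta_k\fp(\zeta_m v)}_0
\leq C\sum_m \me^{-|m-k|\varepsilon}\norm{\zeta_m v}_1,
\]
by the hypothesis \eqref{MPLOP}. Setting $a_k=\norm{\zeta_k\fp v}_0$ and $b_m=\norm{\zeta_m v}_1$, this reads $a_k\leq C(e^{-\varepsilon|\cdot|}\ast b)_k$, so Young's inequality for discrete convolutions gives
\[
\Big(\sum_k a_k^p\Big)^{1/p}\leq C\,\|e^{-\varepsilon|\cdot|}\|_{\ell^1}\Big(\sum_m b_m^p\Big)^{1/p}.
\]
The lower bound in \eqref{MPNormP1} applied to $\fp v$ turns the left-hand side into a lower bound for $\norm{\fp v}_0$, while \eqref{MPNormP2} bounds the right-hand side by $C\norm{v}_1$. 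Combining yields \eqref{MPLOP1}.

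For part (ii), I would apply \eqref{MPNormP3} to $\fp v\in\fa_2$,
\[
\norm{\fp v}_2 \leq C\Big(\sum_k\norm{\zeta_k\fp v}_2^p\Big)^{1/p},
\]
and then estimate each $\norm{\zeta_k\fp v}_2$ via the local hypothesis \eqref{MPLOP2} by $\norm{\sigma_k v}_1+\norm{\sigma_k\fp v}_0$. Since $\sigma_k=\zeta_{k-1}+\zeta_k+\zeta_{k+1}$ has overlap of bounded order, the triangle inequality and reindexing give
\[
\Big(\sum_k\norm{\sigma_k v}_1^p\Big)^{1/p}\leq 3\Big(\sum_k\norm{\zeta_k v}_1^p\Big)^{1/p},
\quad
\Big(\sum_k\norm{\sigma_k\fp v}_0^p\Big)^{1/p}\leq 3\Big(\sum_k\norm{\zeta_k\fp v}_0^p\Big)^{1/p}.
\]
The first is controlled by $\norm{v}_1$ via \eqref{MPNormP2}, and the second is controlled by $\norm{\fp v}_0$ via the upper bound in \eqref{MPNormP1}, which in turn is bounded by $C\norm{v}_1$ using part (i). Assembling these estimates yields \eqref{MPLOP3}.

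The only subtlety I anticipate is the bookkeeping in applying Young's inequality for $p=\infty$ (where the convolution bound is via the uniform bound of $b_m$ times $\|e^{-\varepsilon|\cdot|}\|_{\ell^1}$), and the fact that one needs $v$ to have compact support so that the sum $\sum_m\fp(\zeta_m v)$ is finite and the rearrangements are unconditionally justified; by density this is enough to conclude \eqref{MPLOP1} and \eqref{MPLOP3} on $\fa_1$ once $\fp$ is extended by continuity. Aside from this, the argument is essentially an abstract interpolation between the localized operator estimates and the band-limited partition-of-unity norm equivalences, with no geometric input from the original PDE.
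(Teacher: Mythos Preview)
Your proposal is correct and follows essentially the same route as the paper's proof: decompose via the partition of unity, invoke the exponential-decay hypothesis to reduce to a discrete convolution bounded on $\ell^p$, and then pass back through the norm equivalences \eqref{MPNormP1}--\eqref{MPNormP3}, with part (ii) reduced to part (i) after splitting the $\sigma_k$-terms. The only cosmetic difference is that you phrase the $\ell^p$-boundedness of the convolution via Young's inequality, whereas the paper simply cites the continuity of discrete convolution with kernel $\{\me^{-|l|\varepsilon}\}$ on $\ell^p$.
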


\begin{proof}
According to \eqref{MPNormP1} and \eqref{MPLOP}, we have
\[\begin{split}
&\norm{\fp v}_0=\big\|\fp\big(\sum_{m=-\infty}^\infty\zeta_m
v\big)\big\|_0\leq C\Big(\sum_{k=-\infty}^\infty
\big\|\sum_{m=-\infty}^\infty\zeta_k\fp(\zeta_mv)\big\|_0^p\Big)^{1/p}\\
\leq&C\Big(\sum_{k=-\infty}^\infty\big(
\sum_{m=-\infty}^\infty\norm{\zeta_k\fp(\zeta_mv)\big)}_0^p\Big)^{1/p}\leq
C\Big(\sum_{k=-\infty}^\infty\big(
\sum_{m=-\infty}^\infty\me^{-\abs{m-k}\varepsilon}\norm{\zeta_mv}_1\big)^p\Big)^{1/p}.
\end{split}\]
Since the operator of discrete convolution with kernel
$\set{\me^{-l\varepsilon}}_{l=-\infty}^\infty$ acts continuously in
$l_p$, it follows that
\[
\norm{\fp v}_0\leq
C\Big(\sum_{m=-\infty}^\infty\norm{\zeta_mv}_1^p\Big)^{1/p}.
\]
The last inequality, together with \eqref{MPNormP2}, leads to
\eqref{MPLOP1}.

Furthermore, by \eqref{MPNormP3} and \eqref{MPLOP2},
\[
\norm{\fp v}_2\leq C\Big(\sum_k\norm{\zeta_k\fp
v}_2^p\Big)^{1/p}\leq C\Big(\sum_k\norm{\sigma_kv}_1^p\Big)^{1/p} +
C\Big(\sum_k\norm{\sigma_k\fp v}_0^p\Big)^{1/p}.
\]
Using the definition of $\sigma_k$, \eqref{MPNormP1}, and
\eqref{MPNormP2}, we obtain
\[
\norm{\fp v}_2\leq C\Big(\norm{v}_1^p+\norm{\fp v}_0^p\Big)^{1/p}.
\]
Then we apply \eqref{MPLOP1} to arrive at the result.
\end{proof}

\begin{lemma}\label{le:MP3}
Suppose the supports of the functions $f$ and $g_j, j=0,1,$ are
concentrated on the set $\set{(t,x)\in\G:m-1<t<m+1}$ ($m$ an
integer), and $f\in W^{0,p}(\G)\cap W^{0,2}(\G)$, $g_j\in
W^{1-1/p,p}(\Real)\cap W^{1-1/2,2}(\Real)$, $j=0,1$, for $p>2$. If
the line {\rm Im }$\lambda=\beta$ does not contains the eigenvalues
of problem \eqref{EqP}--\eqref{BdC1P}, then the solution $\varphi\in
W_{(\beta)}^{2,2}(\G)$ of problem \eqref{Eq}--\eqref{BdC1} satisfies
the estimate
\begin{equation}\label{MPEsti2}
\norm{\me^{\beta t}\zeta_l\varphi}_{L^p(\G)}\leq
C\me^{-\abs{m-l}\varepsilon}\Big(\norm{f}_{W_{(\beta)}^{0,p}(\G)} +
\sum_{j=0,1}\norm{g_j}_{W_{(\beta)}^{1-1/p,p}(\Real)}\Big),
\end{equation}
where $\varepsilon$ is a positive number and
$\set{\zeta_l}_{-\infty}^\infty$ is a partition of unity on $\Real$
subordinate to the covering of $\Real$ by the intervals $l-1<t<l+1$.
\end{lemma}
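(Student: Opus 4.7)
The plan is to exploit the Fourier representation \eqref{MPSolu} of the solution $\varphi$, together with the analyticity gained from the compact $t$-support of the data, via a contour-shift argument. By the translation invariance of the coefficients of $L, B_0, B_1$ in $t$, I may assume $m=0$ (the general case follows by replacing $\varphi(t,x)$ with $\varphi(t+m,x)$ and $\zeta_l$ with $\zeta_{l-m}$, which only affects the exponent $|l-m|$). Since $f$ and $g_j$ are supported in $|t|<1$, their partial Fourier transforms $\widehat{f}(\lambda,x)$ and $\widehat{g_j}(\lambda)$ extend to entire functions of $\lambda\in\mathbb{C}$ of exponential type $1$, with uniform bounds in each strip.

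Next, I would use the spectral hypothesis to extract a genuine decay rate. Because the spectrum of problem \eqref{EqP}--\eqref{BdC1P} consists of isolated points and the line $\mathrm{Im}\,\lambda=\beta$ is free of them, one can choose $\varepsilon>0$ so small that the closed strip $\{\beta-\varepsilon\le\mathrm{Im}\,\lambda\le\beta+\varepsilon\}$ contains no eigenvalues. Standard resolvent estimates for the one-dimensional problem \eqref{EqP}--\eqref{BdC1P} on the compact interval $I$, which hold uniformly in $\mathrm{Re}\,\lambda$ for $|\mathrm{Re}\,\lambda|$ large (elliptic with large parameter) and which hold by compactness for $|\mathrm{Re}\,\lambda|$ bounded, then yield a uniform operator bound
\[
\|R(\lambda)\|_{W^{0,p}(I)\times (W^{1-1/p,p}(\partial I))^2\to W^{2,p}(I)}\le C
\]
throughout the strip. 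With this, I can shift the contour in \eqref{MPSolu}: for $l>0$ I push the line of integration down to $\mathrm{Im}\,\lambda=\beta-\varepsilon$, picking up the factor $|\me^{-i\lambda t}|=\me^{(\beta-\varepsilon)t}$, and for $l<0$ I push up to $\mathrm{Im}\,\lambda=\beta+\varepsilon$. Jordan's lemma closes the horizontal pieces at infinity because of the compact-support decay of $\widehat{f},\widehat{g_j}$.

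After the shift I obtain, for $t$ in the support of $\zeta_l$ (so $|t|\sim|l|$),
\[
\me^{\beta t}\varphi(t,x)=\me^{\varepsilon t\cdot\mathrm{sgn}(-l)}\cdot\frac{1}{\sqrt{2\pi}}\int_{\mathrm{Im}\,\lambda=\beta\mp\varepsilon}\me^{-i\lambda t-(\beta\mp\varepsilon)t}\,R(\lambda)\,\widehat{\{f;g_0,g_1\}}(\lambda)\,d\lambda,
\]
where the displayed integral is bounded in $W^{2,2}(\{|t-l|<1\}\times I)$ by the $L^2$-contour bound together with the resolvent estimate, and this is controlled by the norms of $f, g_0, g_1$ in the spaces $W^{0,p}_{(\beta)}(\G)$ and $W^{1-1/p,p}_{(\beta)}(\mathbb{R})$ (via Parseval on the shifted line and the dominance $p>2$ of the weighted $L^p$-norms over weighted $L^2$-norms on sets of finite $t$-measure, using the compact support of the data). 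The exponential prefactor contributes $\me^{-\varepsilon|l|}$ as desired.

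Finally, to pass from $W^{2,2}$ to $L^p$ on the two-dimensional slab $\{|t-l|<1\}\times I$, I invoke the Sobolev embedding $W^{2,2}\hookrightarrow L^p$ valid in two dimensions for every finite $p$, applied to $\zeta_l\varphi$. Combining this with the contour-shift estimate yields \eqref{MPEsti2}. The main obstacle I expect is verifying the uniform resolvent bound for $R(\lambda)$ in the full strip, especially the high-frequency regime $|\mathrm{Re}\,\lambda|\to\infty$: one must check that the large-parameter elliptic estimates for \eqref{EqP}--\eqref{BdC1P} are stable under the $\mathrm{Im}\,\lambda$-shift by $\varepsilon$, which reduces to a standard Agmon-type parameter-elliptic argument but needs the coefficient $a(x)$ to behave well, and to reconcile the hypothesis that only the $W^{0,2}$ version of $f$ is known to produce a $W^{2,2}_{(\beta)}$-solution with the quantitative $L^p$-data norms appearing on the right-hand side of \eqref{MPEsti2}.
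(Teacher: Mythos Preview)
Your approach is correct and rests on the same core idea as the paper---a weight-shift (equivalently, contour-shift) argument exploiting that compactly supported data lie in every weighted class $W^{0,2}_{(\beta_1)}$ for $\beta_1$ near $\beta$---but the execution differs in two respects. First, instead of re-deriving the contour shift and worrying about uniform resolvent bounds and closing the contour at infinity, the paper simply invokes the known $L^2$ isomorphism \eqref{MPEsti1} (cited from Kondrat'ev) at a nearby weight $\beta_1\in(\beta-\varepsilon,\beta+\varepsilon)$: since the data are supported in $\{|t-m|<1\}$ one has $M_{2,\beta_1}\le C M_{p,\beta_1}\le C\me^{(\beta_1-\beta)m}M_{p,\beta}$, and restricting $\me^{\beta t}\varphi$ to $\{|t-l|<2\}$ costs a factor $\me^{(\beta-\beta_1)l}$, giving the exponential $\me^{(\beta-\beta_1)(l-m)}$ directly. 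This bypasses entirely the technical verifications you flag (parameter-elliptic resolvent bounds, Jordan-type arguments), which are in effect buried inside the $L^2$ theory already established. Second, to pass from the local $L^2$ control of $\varphi$ to the local $L^p$ bound, the paper uses the Agmon--Douglis--Nirenberg interior/boundary estimate \eqref{MPEstiloc2} rather than the Sobolev embedding $W^{2,2}\hookrightarrow L^p$ you propose; when $|m-l|\ge 2$ the data terms in \eqref{MPEstiloc2} vanish on the slab and only the $L^2$ remainder survives. Both upgrades are valid in two dimensions, but the ADN route yields the stronger $W^{2,p}$ local bound and fits the paper's framework of local estimates used again in the uniqueness step of Theorem~\ref{th:MP}. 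In short: your plan works, but the paper's proof is shorter because it treats the $L^2$ contour machinery as a black box and uses ADN instead of Sobolev embedding.
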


\begin{proof}
Denote by $M_{p,\beta}$ the term in the parenthesis of the
right-hand side of \eqref{MPEsti2}. Using \eqref{MPEsti1} for any
$\beta_1\in(\beta-\varepsilon,\beta+\varepsilon)$, we obtain
\begin{equation}\label{MPEstiloc1}
\begin{split}
\Big(\int_{l-2}^{l+2}\norm{\me^{\beta t}\varphi}^2_{L^2(I)}\dif
t\Big)^{1/2}\leq&C\me^{(\beta-\beta_1)l}M_{2,\beta_1}\leq
C\me^{(\beta-\beta_1)l}M_{p,\beta_1}\\
\leq&C\me^{(\beta-\beta_1)(l-m)}M_{p,\beta}.
\end{split}
\end{equation}
Applying results in \cite{ADN}, we find that the solution
$\varphi\in W_{loc}^{2,p}(\G)$ of \eqref{Eq}--\eqref{BdC1} has the
following local estimate:
\begin{equation}\label{MPEstiloc2}
\norm{\eta_1\varphi}_{W^{2,p}(\G)}\leq
C\Big(\norm{\eta_2f}_{W^{0,p}(\G)} +
\sum_{j=0,1}\norm{\eta_2g_j}_{W^{1-1/p,p}(\Real)} +
\norm{\eta_2\varphi}_{L^2(\G)}\Big),
\end{equation}
where $\eta_s(t)=\eta(t/s)$, $\eta\in C_c^\infty(-1,1)$, and
$\eta(t)=1$ for $\abs{t}<1/2$.

In the case $\abs{m-l}<2$, the local estimate \eqref{MPEstiloc2}
leads to the estimate
\begin{equation}\label{MPEstiloc3}
\norm{\me^{\beta t}\zeta_l\varphi}_{L^p(\G)}\leq
C\me^{(\beta-\beta_1)(l-m)}M_{p,\beta} +
C\Big(\int_{l-2}^{l+2}\norm{\me^{\beta t}\varphi}^2_{L^2(I)}\dif
t\Big)^{1/2}.
\end{equation}

If $\abs{m-l}\geq2$, then, by \eqref{MPEstiloc2}, the last
inequality remains valid even without the first term on the
right-hand side. Combining \eqref{MPEstiloc1} with
\eqref{MPEstiloc3}, we obtain
\begin{equation}\label{MPEstiloc4}
\norm{\me^{\beta t}\zeta_l\varphi}_{L^p{(\G)}}\leq
C\me^{(\beta-\beta_1)(l-m)}M_{p,\beta}.
\end{equation}
Setting $\beta_1=\beta+\varepsilon$ for $m<l$ and
$\beta_1=\beta-\varepsilon$ for $m\geq l$, we arrive at
\eqref{MPEsti2}.
\end{proof}

Now we prove Theorem \ref{th:MP}.
\begin{proof}[Proof of Theorem {\rm \ref{th:MP}}]
\emph{Existence}. It suffices to prove \eqref{MPEsti} for a solution
$\varphi\in W_{(\beta)}^{2,p}(\G)\cap W_{(\beta)}^{2,2}(\G)$. Let
$\fp$ be the inverse operator of problem \eqref{Eq}--\eqref{BdC1}
defined by \eqref{MPSolu} on the space
$W_{(\beta)}^{2,2}(\G)\times\big(W_{(\beta)}^{1/2,2}(\Real)\big)^2$.
We set
\[\begin{split}
&\norm{u}_{\fa_0}=\norm{\me^{\beta t}u}_{L^p(\G)},\\
&\norm{\set{f;g_0,g_1}}_{\fa_1}=\norm{f}_{W_{(\beta)}^{0,p}(\G)} +
\sum_{j=0,1}\norm{g_j}_{W_{(\beta)}^{1-1/p,p}(\Real)},\\
&\norm{u}_{\fa_2}=\norm{u}_{W_{(\beta)}^{2,p}(\G)}.
\end{split}\]
Lemma \ref{le:MP3} and \eqref{MPEstiloc2} ensure that the hypotheses
of Lemma \ref{le:MP1} are satisfied. Therefore, for the solution
$\varphi=\fp\set{f;g_0,g_1}$, we have \eqref{MPLOP3} or,
equivalently, \eqref{MPEsti}.

\emph{Uniqueness}. Let $\varphi$ be a solution of the homogeneous
problem \eqref{Eq}--\eqref{BdC1} in $W_{(\beta)}^{2,p}(\G)$. We set
$\G_s:=\set{(x,t)\in\G:s<\abs{t}<s+1}$, $s=1,2$,\ldots, and
introduce the sequence of functions $\psi_s\in C_c^\infty(\Real)$,
$\psi_s=1$ for $\abs{t}\leq s$, $\psi_s=0$ for $\abs{t}>1$, and
$\abs{\pt{t}^j\psi_s(t)}\leq C$, $j=1,2$, for some constant
$C<\infty$. Then the function $\psi_s\varphi$ satisfies
\eqref{Eq}--\eqref{BdC1}, where $f$ and $g_j$ are functions
concentrated in $\G_s$. Since $\varphi$ is a solution of the
homogeneous problem, by the local estimate \eqref{MPEstiloc2}, we
have $\varphi\in W_{loc}^{2.2}(\G)$ and
\[
\norm{f}_{W_{(\beta)}^{0,2}(\G)} +
\sum_{j=0,1}\norm{g_j}_{W_{(\beta)}^{1/2,2}(\Real)}\leq
C\norm{\me^{\beta t}\varphi}_{L^p(\G_{s-1}\cup\G_{s}\cup\G_{s+1})}.
\]
From this and  estimate \eqref{MPEsti1} for $\psi_s\varphi$ we
obtain
\[
\norm{\psi_s\varphi}_{W_{(\beta)}^{2,2}(\G)}\leq C\norm{\me^{\beta
t}\varphi}_{L^p(\G_{s-1}\cup\G_{s}\cup\G_{s+1})}.
\]
Since the right side of this inequality tends to zero as
$s\sTo\infty$, it follows that $\varphi=0$. The theorem is proved.
\end{proof}

\bigskip
\noindent {\bf Acknowledgments.} Gui-Qiang Chen's research was
supported in part by the National Science Foundation under Grants
DMS-0505473 and DMS-0244473, by the NSFC under a joint project Grant
10728101, and by an Alexander von Humboldt Foundation Fellowship.
Beixiang Fang's research was supported in part by the Tian-Yuan
Mathematical Foundation of National Natural Science Foundation of
China under Grant 10626035,  a key project of NSFC under Grant
10531020, a joint project from the NSAF of China under Grant
10676020, and the NSF-NSFC under Grant DMS-0720925 ``U.S.-China CMR:
Multidimensional Problems in Nonlinear Conservation Laws and Related
Applied Partial Differential Equations''.

\bigskip
\bigskip

\end{document}